\documentclass[11pt]{article}
\usepackage[top=1in, bottom=1in, left=1in, right=1in]{geometry}

\usepackage[linktocpage,colorlinks,linkcolor=blue,anchorcolor=blue,citecolor=blue,urlcolor=blue,pagebackref]{hyperref}
\usepackage{euscript,mdframed}
\usepackage{microtype,todonotes,relsize}
\usepackage{amsmath,amsthm, amssymb}
\usepackage{algpseudocode}

\usepackage{accents}
\usepackage{cleveref}
\usepackage{comment,url,graphicx,relsize}
\usepackage{amssymb,amsfonts,amsmath,amsthm,amscd,dsfont,mathrsfs,mathtools,nicefrac,bm}
\usepackage{float,psfrag,epsfig,color,url}
\usepackage{epstopdf,bbm,mathtools,enumitem}
\usepackage{subfigure}
\usepackage[ruled,vlined]{algorithm2e}
\usepackage{tablefootnote}
\graphicspath{{Plots/}}
\usepackage{diagbox}
\usepackage{tikz}
\usetikzlibrary{calc,patterns,angles,quotes}
\usepackage{makecell}
\usepackage{multirow}
\usepackage{booktabs}
\usepackage{tcolorbox, mathtools}
\usepackage[dvipsnames]{xcolor}

\newcommand{\bbV}{\mathbb{V}}
\newcommand{\RR}{\mathbb{R}}

\providecommand{\argmin}{\mathop\mathrm{arg min}}

\ifdefined\nonewproofenvironments\else
\ifdefined\ispres\else
\renewenvironment{proof}{\noindent\textbf{Proof.}\hspace*{.3em}}{\qed\\}
\newenvironment{proof-sketch}{\noindent\textbf{Proof Sketch}
  \hspace*{0.em}}{\qed\bigskip\\}
\newenvironment{proof-idea}{\noindent\textbf{Proof Idea}
  \hspace*{0.em}}{\qed\bigskip\\}
\newenvironment{proof-of-lemma}[1][{}]{\noindent\textbf{Proof of Lemma {#1}.}
  \hspace*{0.em}}{\qed\\}
\newenvironment{proof-of-corollary}[1][{}]{\noindent\textbf{Proof of Corollary {#1}.}
  \hspace*{0.em}}{\qed\\}
\newenvironment{proof-of-theorem}[1][{}]{\noindent\textbf{Proof of Theorem {#1}.}
  \hspace*{0.em}}{\qed\\}
\newenvironment{proof-attempt}{\noindent\textbf{Proof Attempt}
  \hspace*{0.em}}{\qed\bigskip\\}

\fi

\newtheorem{theo}{Theorem}[section]
\newtheorem{theorem}[theo]{Theorem}
\newtheorem{lemma}[theo]{Lemma}

\newtheorem{proposition}[theo]{Proposition}
\newtheorem{assumption}[theo]{Assumption}

\newtheorem{definition}[theo]{Definition}

\usetikzlibrary{calc}

\allowdisplaybreaks
\usepackage[authoryear,round]{natbib}
\renewcommand*{\backref}[1]{\ifx#1\relax \else Page #1 \fi}
\renewcommand*{\backrefalt}[4]{%
  \ifcase #1 \footnotesize{(Not cited.)}%
  \or        \footnotesize{(Cited on page~#2.)}%
  \else      \footnotesize{(Cited on pages~#2.)}%
  \fi
}

\newcommand{\Lf}{\ensuremath{L_{f,1}}}
\newcommand{\Lff}{\ensuremath{L_{f,2}}}
\newcommand{\Lfff}{\ensuremath{L_{f,3}}}
\newcommand{\Lg}{\ensuremath{L_{g,1}}}
\newcommand{\Lgg}{\ensuremath{L_{g,2}}}
\newcommand{\Lggg}{\ensuremath{L_{g,3}}}
\newcommand{\Lgggg}{\ensuremath{L_{g,4}}}

\newcommand*{\colorboxed}{}
\def\colorboxed#1#{%
  \colorboxedAux{#1}%
}
\newcommand*{\colorboxedAux}[3]{%
  \begingroup
    \colorlet{cb@saved}{.}%
    \color#1{#2}%
    \boxed{%
      \color{cb@saved}%
      #3%
    }%
  \endgroup
}
\numberwithin{equation}{section}
\usepackage{xcolor} 
\usepackage{marginnote}

\newcommand{\todol}[2][]{{%
 \let\marginpar\marginnote
 \reversemarginpar
 \renewcommand{\baselinestretch}{0.8}%
 \todo[color=yellow]{#2}}}

\title{{ On Second-Order Methods for  Bilevel Optimization}}
\author{
{Jiawen Bi} \thanks{Department of Industrial and System Engineering, University of Minnesota.  \texttt{bi000050@umn.edu}}
\and
{Jiaxiang Li} \thanks{Department of Electrical and Computer Engineering, University of Minnesota.  \texttt{jasonli.optmal@gmail.com}}
\and 
{Mingyi Hong} \thanks{Department of Electrical and Computer Engineering, University of Minnesota.  \texttt{mhong@umn.edu}}
\and 
{Shuzhong Zhang} \thanks{Department of Industrial and System Engineering, University of Minnesota. \texttt{zhangs@umn.edu}}
}
\date{}

\begin{document}
\maketitle

\begin{abstract}
Bilevel optimization is an indispensable 
modeling tool for modern machine learning and engineering design. 
However, the theory and practice for finding second order stationary points in the context of bilevel optimization still remain largely unsettled. 
Even for bilevel optimization with strongly convex lower-level problem, the hyperfunction it induces is in 
general nonconvex. 
Although the Cubic Regularized Newton methods (CRN) famously achieve the optimal $\mathcal{O}(\varepsilon^{-1.5})$ SOSP (second-order stationary point) rate in single-level optimization, it is unclear 
how to control the accuracy of the hypergradient and hyper-Hessian computations in the context of 
applying the second-order methods to bilevel problems in order for the 
overall process to be efficient. In this paper, we set out to answer this question. In particular, we first formulate a {\it double loop}\/ CRN baseline that achieves the optimal outer rate but requires repeated lower level solves. Next, we propose a {\it single loop}\/ cubic regularized Newton algorithm that combines one lower-level gradient step with one Newton step for the hypergradient, and prove an overall deterministic $\mathcal{O}(\varepsilon^{-1.5})$ total oracle complexity, which is optimal. 
In addition, we illustrate that some intuitively simple modifications of our method may fail to hold up the convergence result. 
To the best of our knowledge, this is the first deterministic single loop method for unconstrained NCSC (non-convex upper-level and strongly convex lower-level) bilevel optimization setting that achieves the $\mathcal{O}(\varepsilon^{-1.5})$ optimal convergence rate for finding an $\varepsilon$-SOSP of the hyperfunction.
\end{abstract}

\noindent
\textbf{Keywords:} bilevel optimization, cubic regularized Newton method, nonconvex optimization. 

\section{Introduction}\label{sec:1}

Bilevel optimization provides a powerful framework for modeling hierarchical decision making, where an outer objective depends implicitly on the solution to an inner optimization problem. It has emerged as a central paradigm in modern machine learning, underpinning diverse applications such as hyperparameter optimization \citep{maclaurin2015gradient,franceschi2018bilevel}, meta-learning \citep{finn2017model, franceschi2018bilevel}, neural architecture search \citep{yin2022bm,tu2024efficient}, reinforcement learning \citep{zeng2024demonstrations,hong2022twotimescaleframeworkbileveloptimization}, and, more recently, machine unlearning \citep{reisizadeh2025blur,fan2024challenging} and language model alignment \citep{chakraborty2023parl,shen2023penaltybasedbilevelgradientdescent}, as well as Operations Research problems such as pricing \citep{colson2007overview,colson2005bilevel}, transportation design \citep{brotcorne2001bilevel} and game theory \citep{Silvrio2022ABO, labbe2016bilevel}.  
Formally, in this paper we focus on the nonconvex-strongly-convex (NCSC) unconstrained bilevel problem of the form:
\begin{align}\label{eq:bilevel_problem}
    \min_{x\in\mathbb{R}^m}  \Phi(x) := f(x,y^*(x))
    \quad\text{s.t.}\quad
        y^*(x) = \arg\min_{y\in\mathbb{R}^n} g(x,y),
\end{align}
where $f$ and $g$ represent the upper and lower level objectives respectively. The lower level function $g(x, y)$ is smooth and strongly convex with respect to $y$ and the resulting hyperfunction $\Phi(x)$ is in general nonconvex, and $x, y$ are known as the upper and lower level variables respectively in the literature. As a matter of fact, 
\eqref{eq:bilevel_problem}
is arguably the most well studied case in the bilevel optimization literature. 
Solving \eqref{eq:bilevel_problem} with gradient based methods hinges on computing the hypergradient $\nabla \Phi(x)$. Since $y^*(x)$ is defined implicitly, the hypergradient is typically derived via the chain rule, which requires differentiating the inner problem's optimality condition $\nabla_y g(x, y^*(x)) = 0$. This yields the gradient of the hyperfunction, which we call hypergradient, as follows: 
\begin{equation}\label{eq:hypergradient}
    \nabla \Phi(x) = \nabla_x f(x, y^*(x)) - \nabla_{xy}^2 g(x, y^*(x)) \left[ \nabla_{yy}^2 g(x, y^*(x)) \right]^{-1} \nabla_y f(x, y^*(x)).
\end{equation}
In practice, computing \eqref{eq:hypergradient}
exactly is 
impractical because $y^*(x)$ cannot be exactly computed. 
A standard technique to get around this difficulty is known as {\it 
Approximate Implicit Differentiation}\/ (AID) \citep{ghadimi2018approximationmethodsbilevelprogramming}, which approximates the hypergradient by first approximating $y^*(x)$ with an iterative solver and then approximating the Hessian inverse-vector product using methods such as the Neumann series approach \citep{hong2022twotimescaleframeworkbileveloptimization} or the conjugate gradient method \citep{ji2021bilevel}.

Assuming access to an approximate hypergradient, the goal of optimization algorithms is to find points that satisfy certain stationarity conditions. Suppose that $\Phi(x)$ is twice differentiable. Given an accuracy level $\varepsilon$, 
the standard notions of the first and second order $\varepsilon$-stationarity conditions are defined 
as follows: 
\begin{definition}[$\varepsilon$-FOSP]\label{def:fosp}
    An $\varepsilon$-first order stationary point (FOSP) is a point $x^*$ such that
\begin{align}
    \|\nabla \Phi(x^*)\|\le \varepsilon. \nonumber
\end{align}
\end{definition}

\begin{definition}[$\varepsilon$-SOSP]\label{def:sosp}
 An $\varepsilon$-second order stationary point (SOSP) is a point $x$ such that
\begin{align}\label{eq:so_stationary}
    \|\nabla \Phi(x)\|\le\varepsilon,\ \lambda_{\min}(\nabla^2 \Phi(x))\geq -\sqrt{\varepsilon}
\end{align}
where $\lambda_{\min}(\nabla^2 \Phi(x))$ denotes the minimal eigenvalue of $\nabla^2 \Phi(x)$.
\end{definition}
In bilevel optimization, even when $g(x,\cdot)$ is strongly convex, the induced hyperfunction $\Phi(x)=f(x,y^*(x))$ might be nonconvex. Therefore, first order bilevel methods only guarantees to converge to saddle points of $\Phi$. The goal of this paper is to obtain deterministic convergence to an $\varepsilon$-SOSP of $\Phi$, matching the best-known rate for second order single level nonconvex optimization.

In unconstrained nonconvex single level optimization, an SOSP solution 
is both desirable and achievable. 
There are essentially two approaches to reaching an SOSP. The first is known as Perturbed Gradient Descent \citep{ge2015escaping,jin2017escape}, which adds 
a Gaussian perturbation when the iterates approach an approximate stationary point. It can be shown that the iterates converge with high probability to an $\varepsilon$-SOSP with an iteration complexity of $\mathcal{O}(\varepsilon^{-2}(\log \frac{d}{\varepsilon})^4)$. 
The second approach is the Cubic Regularized Newton method\/ (CRN) ~\citep{nesterov2006cubic}, which adds a cubic regularizer term to the second order Taylor expansion and updates with the minimizer of such regularized subproblem. CRN converges deterministically to an $\varepsilon$-SOSP with iteration rate $\mathcal{O}(\varepsilon^{-1.5})$. 

Regarding the bilevel optimization problem \eqref{eq:bilevel_problem}, there is a gap between existing results and the best convergence we could expect. \cite{huang2025efficiently} introduced the Perturbed AID and iNEON algorithms by adapting perturbed gradient and NEON techniques and established convergence to $\varepsilon$-SOSPs at a rate of $\mathcal{O}(\varepsilon^{-2}\log(\varepsilon^{-1}))$ for NCSC (non-convex upper-level and strongly convex lower-level) bilevel and NCSC minimax problems. 
Later, \cite{yang2023accelerating,xian2025escaping} proposed accelerated algorithms based on this framework and obtained faster convergence rate of $\mathcal O(\varepsilon^{-1.75}\log(\varepsilon^{-1}))$. 
However, these prior results are first order methods with only high probability assurance instead of deterministic convergence; moreover, the convergence rates are suboptimal in reference to the lower bound $\mathcal O(\varepsilon^{-1.5})$ for SOSP. Naturally, one would in principle prefer deterministic convergence if at all possible. 
Along that line, very recently \cite{yang2026second} proposed penalty function second order methods, which find an SOSP with an $\mathcal O(\varepsilon^{-1.5})$ outer iteration complexity; however, the inner problem still requires to solve a subproblem, therefore, the overall complexity is $\mathcal O(\varepsilon^{-1.5}\log(\varepsilon^{-1}))$.  Since all prior convergence rates fail to reach the exact lower bound, and that first order methods can only guarantee convergence with high probability, it is natural to ask whether one can achieve deterministic convergence to an SOSP 
with a rate matching the lower bound. 
In this paper, we set out to answer the above question affirmatively. In particular, we propose two methods: DLCRN (double-loop cubic regularized Newton) and SLCRN (single-loop cubic regularized Newton). Both of them converge to an SOSP deterministically, and SLCRN achieves the optimal convergence rate to an SOSP of the hyperfunction.

\subsection{Our Contributions}
Our primary contributions are summarized as follows.
\begin{itemize}[leftmargin=1.6em]
\item \textbf{Optimal–rate second order convergence.}  
      We show that our proposed methods (DLCRN and SLCRN) deterministically find an $\varepsilon$–second order stationary point of the hyperfunction in
          {$\mathcal{O}\bigl(\varepsilon^{-1.5}\bigr)$}
      outer loop oracle complexity (Proposition \ref{prop:conv_dl} and \ref{thm:main}),  $\mathcal{O}\bigl(\varepsilon^{-1.5}\log(\varepsilon^{-1})\bigr)$ and $\mathcal{O}\bigl(\varepsilon^{-1.5}\bigr)$ inner loop oracle complexity, respectively (Theorem \ref{thm:dl_ll} and \ref{thm:main}). 
      The convergence rate of SLCRN matches the optimal complexity for second order nonconvex optimization in terms of $\varepsilon$. This improves on previous bilevel methods which require random perturbations and only offer high probability guarantees \citep{huang2025efficiently,yang2023accelerating}. It also improves the fastest convergence rate of first order methods by a factor of $\mathcal{O}(\varepsilon^{-0.25}\log (\varepsilon^{-1}))$.

\item \textbf{Simple Algorithm Design and Efficient Tuning.} While we analyze the DLCRN algorithm as an extension of the Cubic Regularized Newton Method on bilevel problems, its algorithm structure demands an inner loop subproblem solving procedure, and its stopping criterion is difficult to verify in certain practical applications. SLCRN circumvents this bottleneck by replacing the inner loop by simply running one gradient descent step and one Newton step. Moreover, the design of SLCRN's lower level update is critical to its convergence and stability. We provide high-level insights and construct a counterexample to explain why two intuitive simplifications 
might lose the optimal convergence rate, confirming that our lower level update structure is necessary to maintain the optimal convergence rate. Moreover, SLCRN requires tuning with only two parameters, which saves considerable time compared to tuning multiple variables such as stepsizes and the parameters regarding the inner loop stopping criteria.

\end{itemize}

\subsection{Related Work}\label{sec:literature}

The results in the literature that are most related to ours include \cite{huang2025efficiently,yang2023accelerating}, which also aim to finding a second order stationary point under the bilevel framework. In particular, \cite{huang2025efficiently} adheres to  the principle of perturbation in escaping saddle points in general nonconvex optimization problem, and achieves a high-probability convergence to an SOSP with a convergence rate of $\mathcal{O}(\varepsilon^{-2})$ in NCSC bilevel 
problems.
In the context of the NCSC minimax problem, which is a special case of our bilevel setting, several works have explored Cubic Regularized Newton methods. \cite{chen2021cubic} and \cite{luo2022finding} both propose double loop algorithms that, while deterministic, are complex to implement and analyze. Furthermore, their approaches for computing the hyper-Hessian are tailored specifically to the minimax structure and do not readily extend to the general bilevel case. In contrast, our proposed algorithm SLCRN is single loop, deterministic, and applicable to the broader class of NCSC bilevel problems. 
Below we provide more information for the benefit of interested readers. 

\subsubsection*{Minimax and Bilevel Optimization}
Unconstrained BLO has recently seen remarkable advancements through gradient-based approaches (AID and ITD) (see e.g.~\cite{ghadimi2018approximationmethodsbilevelprogramming,ji2021bilevel,hong2022twotimescaleframeworkbileveloptimization,chen2022singletimescalemethodstochasticbilevel}) and function value based approaches (see e.g.~\cite{lu2024firstorderpenaltymethodsbilevel,shen2025penalty,kwon2023penalty,jiang2025discretization,liu2022bome}). 
Gradient based approaches reformulate the problem around a single variable hyperfunction and apply gradient descent variants. A central challenge in this paradigm is to compute the hypergradient $\nabla \Phi(x)$, as it requires access to the exact optimal solution at the lower level $y^*(x)$, which is rarely available in practice. Two prominent approximation techniques have been developed: Approximate Implicit
Differentiation (AID), for example in \cite{ghadimi2018approximationmethodsbilevelprogramming,ji2021bilevel,jiang2024barrier}, and Iterative Differentiation (ITD), for example in \cite{franceschi2017forwardreversegradientbasedhyperparameter}, \cite{franceschi2018bilevel}. The popular AID approach utilizes the implicit function theorem, which typically requires the solution of a linear system derived from the optimality conditions of the inner problem. Under the standard assumption that the inner problem is strongly convex, AID-based algorithms have been shown to achieve strong convergence guarantees  (cf.~e.g.~\cite{ghadimi2018approximationmethodsbilevelprogramming,hong2022twotimescaleframeworkbileveloptimization,ji2021bilevel,chen2022singletimescalemethodstochasticbilevel}). Motivated by the application scenarios where the strong convexity is too restrictive, recent efforts have focused on relaxing this assumption, extending bilevel analysis to inner problems that satisfy the PL condition  (see e.g.~\cite{chen2025set,chen2024finding}), or are locally Morse (see e.g.~\cite{bolte2025bilevel}), or are generally nonconvex (see e.g.~\cite{jiang2025correspondence}).

On the other hand, an alternative paradigm avoids direct hypergradient estimation by reformulating the bilevel program as a single level problem. This is typically achieved by incorporating the inner problem's objective as a penalty term in the outer objective. Such methods are attractive because they bypass the complication of needing to differentiate through the inner problem's solution map, therefore extending the applicability to settings where the inner objective is merely convex (see e.g.~\cite{lu2024firstorderpenaltymethodsbilevel}) or merely satisfies the PL condition (see e.g.~\cite{xiao2024unlocking}). 
However, a critical theoretical limitation of many penalty-based approaches is the potential mismatch between the stationary points of the surrogate objective and those of the true hyperfunction $\Phi(x)$. As noted in \cite{chen2024finding}, convergence to a stationary point of the penalized function with first order methods does not guarantee even the first order stationarity for the original bilevel problem, leaving the quality (or fidelity) of the solutions so-obtained up in the air for discussion.

\subsubsection*{Escaping Saddle Points and High Order Methods for Single level Optimization}
Finding an $\varepsilon$-SOSP in general nonconvex optimization is a well-studied problem. One of the most prominent approaches is the Cubic Regularized Newton (CRN) method, introduced by \cite{nesterov2006cubic}. Foundational work established its fast convergence for convex problems and its ability to find SOSPs in the nonconvex case \citep{nesterov2008accelerating}. Subsequent research has extended the original framework to include adaptive regularization parameters \citep{cartis2011adaptive2,he2025history}, stochastic finite-sum objectives \citep{tripuraneni2018stochastic}, momentum-based acceleration \citep{chayti2024improving}, variational inequalities \citep{huang2022cubic}.

A key challenge in CRN is how to efficiently solve the cubic subproblem at each iteration. Various strategies have been proposed to ensure solving the subproblems practically efficient. The original work suggested collapsing the problem into a one-dimensional root-finding task \citep{nesterov2006cubic}. More recent approaches have focused on avoiding explicit Hessian factorization, using iterative methods like gradient descent on the subproblem \citep{carmon2019gradient} or solving it within a low-dimensional Krylov subspace to achieve dimension-free iteration bounds \citep{jiang2024krylov}. Another line of research has focused on relaxing the need for exact gradient and Hessian information, developing conditions under which inexact Hessians still permit convergence \citep{cartis2011adaptive, wang2018note}.

Besides the CRN, some other forms of methods utilizing high-order information have been proposed as well; for example, high order regularized methods \citep{huang2025approximation}; trust region based methods \citep{jiang2023beyond,zhang2022drsom}; homogeneous second-order methods \citep{he2025homogeneous,zhang2025homogeneous}. These methods also provably escape saddle points. An alternative line of research focuses on first-order methods but using stochastic perturbations to escape saddle points, including the Perturbed Gradient Descent Method which finds an $\varepsilon$-SOSP with high probability \citep{ge2015escaping, jin2017escape, jin2021nonconvex, fang2019sharp}. While these algorithms are computationally cheaper per iteration, they generally have slower convergence rates as compared to the CRN, typically by a factor of $\mathcal{O}(\varepsilon^{-0.5})$ in the worst-case scenario. Moreover, the first-order based methods are guaranteed to converge to an SOSP only probabilistically, rather than deterministically as in the case of higher-order methods.

\section{Finding Second Order Stationary Points: the Cubic Regularized Newton Approach}\label{sec:2}

In this section, we establish the theoretical foundations required in finding an SOSP in the bilevel optimization framework. We begin by reviewing the standard Cubic Regularized Newton method and highlight how the exact hypergradient and hyper-Hessian could be derived via the Implicit Function Theorem if we were to apply CRN to bilevel optimization. Since calculating these exact derivatives is computationally intractable due to numerical error in real-world scenarios, we then resort to the Approximate Implicit Differentiation (AID) approach via 
inexact lower level solutions. Finally, we establish the Lipschitz continuity properties of these approximations for our subsequent algorithms and convergence analysis in the next section.

\subsection{Cubic Regularized Newton for Bilevel Optimization}\label{sec:crn_for_bo}
The Cubic Regularized Newton (CRN) method, introduced by \cite{nesterov2006cubic}, augments the classical Newton step with a scalar cubic term that guarantees global convergence to an SOSP in nonconvex settings. 
For a single level function $F(x)$, at each iterate $x_k$, let $\bm{G}_k=\nabla F(x_k)$ and $\bm{H}_k=\nabla^{2}F(x_k)$, and 
CRN computes a step $s_k\in\mathbb{R}^m$ as the global minimizer of the cubic subproblem and performs the update as follows
\begin{align}
    s_k &=
    \argmin_{s\in\mathbb{R}^m}
    \Bigl\{
        \bm{G}_k^{\top}s
        +\dfrac12\,s^{\top}\bm{H}_k s
        +\dfrac{M}{6}\,\|s\|^{3}
    \Bigr\}, \nonumber\\
    x_{k+1} &= x_k + s_k, \nonumber
\end{align}
where $M>0$ is a regularization parameter satisfying 
$M\ge L_{\nabla^2 F}$ (the Hessian Lipschitz constant) in single level optimization problems.
\cite{nesterov2006cubic} showed that the CRN method converges to an $\varepsilon$-SOSP deterministically with the iteration rate of $\mathcal O(\varepsilon^{-1.5})$.

To apply the classical CRN to bilevel optimization, we need to construct the gradient and Hessian of the hyperfunction. Notice that for bilevel problem where the lower level is strongly convex, we can compute the hyperfunction gradient through the Implicit Function Theorem and the chain rule, as mentioned in Lemma 2.1 in \cite{ghadimi2018approximationmethodsbilevelprogramming}.
Given $g$'s strong convexity, we take the derivative of $g$ with respect to $x$ and obtain
\begin{align*}
    \nabla_y g(x,y^*(x))=0.
\end{align*}
Taking the gradient with respect to $x$ on both sides simultaneously, we obtain
\begin{align*}
    \nabla^2_{xy}g(x,y^*(x))+\nabla_x y^*(x)\nabla^2_{yy}g(x,y^*(x))=0.
\end{align*}
When the lower level function $g(x, y)$ is strongly convex, $\nabla^2_{yy}g(x,y)$ is invertible, which implies
\begin{align}
    \nabla_x y^*(x)=-\nabla^2_{xy}g(x,y^*(x))(\nabla^2_{yy}g(x,y^*(x)))^{-1}. \nonumber
\end{align}
By denoting $H$ as $\nabla^2_{yy} g(x,y^*(x))$, and defining the hyperfunction $\Phi(x)=f(x,y^*(x))$, we can express the gradient of $\Phi(x)$ with respect to $x$ as
\begin{align*}
    \nabla_x \Phi(x) &= \nabla_x f(x,y^*(x))+\nabla_xy^*(x)\nabla_y f(x,y^*(x))\\
    &=\nabla_x f(x,y^*(x))-\nabla^2_{xy}g(x,y^*(x))H^{-1}\nabla_y f(x,y^*(x)).
\end{align*}
Similarly, we can obtain \begin{align}
        \nabla^2_{xx}\Phi(x) = \ & \nabla^2_{xx}f(x, y^*(x)) - \nabla^2_{xy}g(x, y^*(x))H^{-1}\nabla^2_{xy}f(x, y^*(x))^\top\notag\\
    &- \nabla^2_{xy}g(x, y^*(x))H^{-1}\left(\nabla^2_{xy}f(x, y^*(x)) - \nabla^2_{xy}g(x, y^*(x))H^{-1}\nabla^2_{yy}f(x, y^*(x)) \right)^\top \notag\\
    &- \nabla^2_{xy}g(x, y^*(x))\left(H^{-1}\nabla^3_{yxy} g(x, y^*(x))H^{-1}\right)\nabla_yf(x, y^*(x)) \notag\\
    &- \left(\nabla^3_{xxy}g(x, y^*(x)) -\nabla^2_{xy}g(x,y^*(x))H^{-1}\nabla^3_{yxy}g(x, y^*(x))\right)H^{-1}\nabla_yf(x, y^*(x)),\nonumber
    \end{align}
where the third order derivative tensor of a function $r(x, y)$ is given by \begin{align*}\nabla^3_{xxy} r(x,y) = \left[\dfrac{\partial^3 r}{\partial x_i\partial x_j\partial y_k}\right]_{m\times m\times n},
    \nabla^3_{yxy}r(x,y) = \left[\dfrac{\partial^3f}{\partial y_i\partial x_j\partial y_k}\right]_{n\times m\times n}, 
    \nabla^3_{yyy}r(x, y) = \left[\dfrac{\partial^3f}{\partial y_i\partial y_j\partial y_k}\right]_{n\times n\times n}.\end{align*}
Note that the multiplications between tensor $A\in\mathbb{R}^{n_1\times n_2\times n_3}$ and column vector $b\in\mathbb{R}^{n_3\times 1}$, and tensor $A\in\mathbb{R}^{n_1\times n_2\times n_3}$ and a matrix $B\in\mathbb{R}^{n_3\times n_4}$ are:  
\(    
\left(A b\right)_{ij} = \sum_{k=1}^{n_3} A_{ijk}b_k,\, AB = [A_{[i, :, :]}B]_{i\in [n_1]}.
\)

Notice that computing the gradient and the Hessian requires the exact value of the lower level solution $y^*(x)$, which is almost impossible to compute in practice. 
Instead, we could expect to obtain some numerically approximate solution $\tilde y$ of the lower level problem within a certain accuracy, and then use that approximative solution $\tilde y$ of $y^*(x)$ to construct some gradient and Hessian proxies for the upper level updates.
To obtain gradient and Hessian information per outer iteration, we refer to the Approximate Implicit Differentiation (AID) scheme.

 For the AID scheme, we first perform a few inner updates to produce an inexact lower level point $\tilde y$, then evaluate the closed form implicit function formulas at $(x,\tilde y)$ to approximate the true hypergradient $\nabla_x\Phi$ and the hyper-Hessian $\nabla^{2}_{xx}\Phi$. The approximation of the hypergradient $\nabla_x\Phi$ is well-studied (e.g.~in \cite{ghadimi2017second, ji2021bilevel}), which is formulated as follows:  
 \begin{align}
    \widehat{\nabla}_x\Phi(x, \tilde y) = \nabla_xf(x, \tilde y) - \nabla_{xy}^2g(x, \tilde y)\left(\nabla_{yy}^2g(x, \tilde y)\right)^{-1}\nabla_yf(x, \tilde y). \nonumber
\end{align}

Similarly we could derive the approximation hyper-Hessian function at the inexact lower level point $\tilde y$ as follows
\begin{align}
        \widehat{\nabla}^2_{xx}\Phi(x, \tilde y) := & \nabla^2_{xx}f(x, \tilde y) - \nabla^2_{xy}g(x, \tilde y)H^{-1}\nabla^2_{xy}f(x, \tilde y)^\top \notag\\
    &- \nabla^2_{xy}g(x, \tilde y)H^{-1}\left(\nabla^2_{xy}f(x, \tilde y) - \nabla^2_{xy}g(x, \tilde y)H^{-1}\nabla^2_{yy}f(x, \tilde y) \right)^\top \notag\\
    &- \nabla^2_{xy}g(x, \tilde y)\left(H^{-1}\nabla^3_{yxy} g(x, \tilde y)H^{-1}\right)\nabla_yf(x, \tilde y) \notag\\
    &- \left(\nabla^3_{xxy}g(x, \tilde y) -\nabla^2_{xy}g(x,\tilde y)H^{-1}\nabla^3_{yxy}g(x, \tilde y)\right)H^{-1}\nabla_yf(x, \tilde y),\label{eq:def_approx_hess}
    \end{align}
where $H = \nabla_{yy}^2g(x, \tilde y)$. Notice that in the computation of exact and approximate hyper-Hessian, we might have to use tensor information.

With the exact and approximate hypergradient and hyper-Hessian derived above, we need some mild assumptions to guarantee the convergence of inexact CRN on the hyperfunction. In the following Section \ref{sec:lip} we specify the main assumptions and prove the Lipschitz continuity of the approximate hypergradient and hyper-Hessian.

\subsection{Lipschitz Continuity of Approximate Hypergradient and Hyper-Hessian}
\label{sec:lip}

To facilitate further discussion, we have the following somewhat standard assumptions regarding the upper level objective function $f$ and the lower level objective function $g$, mostly on their Lipschitz smoothness property.  
As noted in 
\cite{huang2025efficiently},
such conditions are essential for identifying SOSP's in bilevel optimization. 
First of all, let us assume sufficient continuous differentiability of upper and lower level objective functions, as well as the strong convexity of $g(x, \cdot)$.
\begin{assumption}\label{assumption:general1}
    $f(x, y)$ is twice continuously differentiable, and $g(x, y)$ is three times continuously differentiable.
\end{assumption}

\begin{assumption}\label{assumption:nonlinear2} $g(x,y)$ is $\mu_g$-strongly convex in $y$ for any $x\in\RR^m$.
\end{assumption}
To locate an SOSP \eqref{eq:so_stationary} via AID approach, we further assume the Lipschitz continuity of upper level function $f(x, y)$ and third order derivative of lower level problem $g(x, y)$, which are common and indispensable assumptions to prove the Lipschitz continuity of $\widehat \nabla_{xx}^2\Phi(x, y)$ (defined in \eqref{eq:def_approx_hess}) with respect to $x$ and $y$ (see \cite{huang2025efficiently}). 

\begin{assumption}\label{assumption:general2}The function, gradient, and Hessian of upper level function $f$ are Lipschitz continuous, and the function, gradient, Hessian and third order derivatives of lower level function $g$ are Lipschitz continuous. 
In other words, the following inequalities hold for every $x,\overline{x}\in\RR^m,y,\overline{y}\in\RR^n$: 
\begin{enumerate}
    \item $\|\nabla_x f(x,y)\|\le \Lf$; $\|\nabla_y f(x,y)\|\le \Lf$;
    \item $\|\nabla^2_{xx}f(x, y)\|\le \Lff$;\quad$\|\nabla^2_{xy}f(x, y)\|\le \Lff$; \quad$\|\nabla^2_{yy}f(x, y)\|\le \Lff$;
    \item $\|\nabla^2_{xx} f(x,y)-\nabla^2_{xx} f(\overline{x},\overline{y})\|\le \Lfff\|(x,y)-(\overline{x},\overline{y})\|$;\\$\|\nabla^2_{xy} f(x,y)-\nabla^2_{xy} f(\overline{x},\overline{y})\|\le \Lfff\|(x,y)-(\overline{x},\overline{y})\|$;
    \item $\|\nabla_x g(x,y)\|\le \Lg$; $\|\nabla_y g(x,y)\|\le \Lg$;
    \item $\|\nabla^2_{yy} g(x,y)\|\le\Lgg$; \quad$\|\nabla^2_{xy} g(x,y)\|\le \Lgg$;
    \item $\|\nabla^3_{xxy} g(x,y)\|\le\Lggg$; \quad$\|\nabla^3_{xyy} g(x,y)\|\le\Lggg$;\quad$\|\nabla^3_{yyy} g(x,y)\|\le\Lggg$;
    \item $\|\nabla^3_{xxy} g(x,y) - \nabla^3_{xxy} g(\overline{x},\overline{y})\|\le\Lgggg\|(x,y) - (\overline{x}, \overline{y})\|$;\\ $\|\nabla^3_{yxy} g(x,y) - \nabla^3_{yxy} g(\overline{x},\overline{y})\|\le\Lgggg\|(x,y) - (\overline{x}, \overline{y})\|$;\\$\|\nabla^3_{yyy} g(x,y) - \nabla^3_{yyy} g(\overline{x},\overline{y})\|\le\Lgggg\|(x,y) - (\overline{x}, \overline{y})\|$.
\end{enumerate}
\end{assumption}

With the above assumptions and the approximate hyper-Hessian $\widehat{\nabla}_{xx}^2\Phi(x, y)$, below we establish that the approximate hyper-Hessian function 
$\widehat{\nabla}^{2}_{xx}\Phi(x,y)$ is jointly Lipschitz in $(x,y)$ in Lemma \ref{lem:lip_xy}. Notice that the Lipschitz continuity of $\nabla_{xx}^2\Phi(x)$ is already given in \cite{huang2025efficiently}. However, because we are exploiting inexact lower level solution $\tilde y$, we need an explicit bound of the accuracy of hyper-Hessian. Additionally, we also need to establish the Lipschitz continuity of $\widehat{\nabla}^2_{xx}\Phi(x,y)$ with respect to $y$. 
The proof the lemma is relegated to Appendix \ref{sec:pf_lip_xy}

\begin{lemma}\label{lem:lip_xy}
    Under Assumption \ref{assumption:general1}, \ref{assumption:nonlinear2} and \cref{assumption:general2}, $\widehat{\nabla}^2_{xx}\Phi(x, y)$ is Lipschitz continuous.
    In other words, there exists a positive constant $L_{\widehat{\nabla}^2_{xx}\Phi}$ such that for all $x_1, x_2, y_1, y_2$ \begin{align}
        \|\widehat{\nabla}^2_{xx}\Phi(x_1, y_1) - \widehat{\nabla}^2_{xx}\Phi(x_2, y_2)\| \le L_{\widehat{\nabla}^2_{xx}\Phi} \|(x_1, y_1) - (x_2, y_2)\|. \nonumber
    \end{align}
\end{lemma}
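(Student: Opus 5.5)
We would prove the lemma by viewing $\widehat{\nabla}^2_{xx}\Phi(x,y)$ in \eqref{eq:def_approx_hess} as a finite sum of products of a small number of building blocks, each uniformly bounded and jointly Lipschitz in $(x,y)$. Then we would apply the product rule for Lipschitz functions. If $A_1,\dots,A_p$ (matrices, tensors or vectors) satisfy $\|A_i\|\le B_i$ and $\|A_i(z_1)-A_i(z_2)\|\le L_i\|z_1-z_2\|$ with $z=(x,y)$, then telescoping gives
\begin{align*}
\|A_1\cdots A_p(z_1)-A_1\cdots A_p(z_2)\|\le \Big(\sum_{i=1}^p L_i\prod_{j\neq i}B_j\Big)\|z_1-z_2\|.
\end{align*}
This holds for the tensor–matrix and tensor–vector products defined in Section \ref{sec:crn_for_bo}, since the norms used there are submultiplicative. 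Summing over the finitely many terms of \eqref{eq:def_approx_hess} then yields an explicit constant $L_{\widehat{\nabla}^2_{xx}\Phi}$.

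\textbf{Bounded, Lipschitz building blocks.} We would list the blocks and their constants.
\begin{itemize}
\item $\nabla_y f$: bounded by $\Lf$ (item 1 of Assumption \ref{assumption:general2}) and Lipschitz with constant of order $\Lff$ by the Hessian bounds of item 2.
\item $\nabla^2_{xx}f$, $\nabla^2_{xy}f$: bounded by $\Lff$ and Lipschitz with constant $\Lfff$ (items 2–3).
\item $\nabla^2_{xy}g$: bounded by $\Lgg$ and Lipschitz with constant of order $\Lggg$, using the third-derivative bounds of item 6.
\item $\nabla^3_{xxy}g$, $\nabla^3_{yxy}g$: bounded by $\Lggg$ and Lipschitz with constant $\Lgggg$ (items 6–7).
\item $H^{-1}=(\nabla^2_{yy}g)^{-1}$: by Assumption \ref{assumption:nonlinear2}, $\|H^{-1}\|\le 1/\mu_g$ uniformly. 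Lipschitz continuity follows from the identity
\begin{align*}
H_1^{-1}-H_2^{-1}=H_1^{-1}(H_2-H_1)H_2^{-1},
\end{align*}
which gives $\|H_1^{-1}-H_2^{-1}\|\le \mu_g^{-2}\|H_1-H_2\|$. Here $\nabla^2_{yy}g$ is Lipschitz with constant of order $\Lggg$ by item 6.
\end{itemize}

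\textbf{The term involving $\nabla^2_{yy}f$.} This block enters through $\nabla^2_{xy}gH^{-1}\nabla^2_{yy}f$, and it needs both a bound and a Lipschitz constant. The bound $\Lff$ is given. Item 3 of Assumption \ref{assumption:general2} lists Lipschitz continuity only for $\nabla^2_{xx}f$ and $\nabla^2_{xy}f$. We would read the assumption ("the Hessian of $f$ is Lipschitz") as covering all Hessian blocks, including $\nabla^2_{yy}f$, with constant $\Lfff$; if pressed, this reading is what makes the statement true.

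\textbf{Assembling the terms.} We would then go through the four lines of \eqref{eq:def_approx_hess} one at a time. Each line is a product of at most five blocks; for example, $\nabla^2_{xy}g\,H^{-1}\nabla^3_{yxy}g\,H^{-1}\nabla_y f$ has five. For each term we write the constant as a sum of (Lipschitz constant of one factor) times (product of the bounds of the others). The main obstacle is bookkeeping rather than ideas: the tensor–matrix products must respect the index conventions, so that contracting $\nabla^3_{yxy}g$ with $H^{-1}$ on each side is submultiplicative in the chosen (operator/Frobenius) norm. We would fix one consistent norm for tensors, e.g. the one in which items 6–7 are stated, and verify $\|AB\|\le\|A\|\|B\|$ and $\|Ab\|\le\|A\|\|b\|$ for it once. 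Collecting the constants gives $L_{\widehat{\nabla}^2_{xx}\Phi}$ as a polynomial in $\Lf,\dots,\Lgggg$ and $1/\mu_g$, of degree up to $\mu_g^{-3}$ because $H^{-1}$ appears twice in the third line and $\nabla_y f$ then adds a factor.
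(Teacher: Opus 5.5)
Your proposal is correct and follows essentially the same route as the paper. The paper splits $\widehat{\nabla}^2_{xx}\Phi$ into five terms $\Delta_1,\dots,\Delta_5$ and bounds each by telescoping over bounded, Lipschitz factors, using $\|H^{-1}\|\le\mu_g^{-1}$ and $H_1^{-1}-H_2^{-1}=H_1^{-1}(H_2-H_1)H_2^{-1}$ for the inverse Hessian. Your caveat about $\nabla^2_{yy}f$ is also apt: the paper's bound on $\Delta_3$ silently uses Lipschitz continuity of $\nabla^2_{yy}f$ with constant $\Lfff$, which is exactly the reading you adopt.
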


Next, in Lemma \ref{lemma:GH_lip} we quantify how
lower level inaccuracy propagates to the
hypergradient and hyper-Hessian constructed by AID. This is a straightforward consequence of Assumptions \ref{assumption:general1} and \ref{assumption:general2}. In Proposition \ref{prop:lPhi}, we show that the hyper-Hessian $\nabla^2_{xx}\Phi(x)$ is Lipschitz continuous with respect to $x$. The proofs for Lemma \ref{lemma:GH_lip} and Proposition \ref{prop:lPhi} can be found in Appendix \ref{sec:pf_GHPhi} and Appendix \ref{sec:pf_prop_lPhi} respectively.

\begin{lemma}\label{lemma:GH_lip}
Under Assumptions \ref{assumption:general1}, \ref{assumption:nonlinear2}  \ref{assumption:general2}, the approximate gradient $\widehat{\nabla}_x\Phi(x,y)$ and Hessian $\widehat{\nabla}^2_{xx}\Phi(x,y)$ of the hyperfunction are Lipschitz continuous with respect to $y$. In particular, suppose $\tilde y$ is an approximate solution of the lower level problem, then there exist constants $L_G, L_H > 0$ such that for all $x, \tilde y$, $\|\widehat{\nabla}_x\Phi(x,\tilde y)-\nabla_x\Phi(x)\|\le L_G\|\tilde{y} - y^*(x)\|$, and $\|\widehat{\nabla}^2_{xx}\Phi(x,\tilde y)-\nabla^2_{xx}\Phi(x)\|\le L_H\|\tilde{y} - y^*(x)\|$ for all $x$.

\end{lemma}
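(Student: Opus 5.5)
The key observation is that the exact quantities are the approximate ones evaluated at the exact lower level solution: $\nabla_x\Phi(x)=\widehat{\nabla}_x\Phi(x,y^*(x))$ and $\nabla^2_{xx}\Phi(x)=\widehat{\nabla}^2_{xx}\Phi(x,y^*(x))$, by comparing the formulas of Section~\ref{sec:crn_for_bo} with \eqref{eq:def_approx_hess}. Hence both claims reduce to Lipschitz continuity of $y\mapsto\widehat{\nabla}_x\Phi(x,y)$ and $y\mapsto\widehat{\nabla}^2_{xx}\Phi(x,y)$, uniformly in $x$. Applying this with $y_1=\tilde y$ and $y_2=y^*(x)$ gives the stated bounds.

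For the Hessian part, I would invoke Lemma~\ref{lem:lip_xy} with $x_1=x_2=x$. This gives $\|\widehat{\nabla}^2_{xx}\Phi(x,\tilde y)-\widehat{\nabla}^2_{xx}\Phi(x,y^*(x))\|\le L_{\widehat{\nabla}^2_{xx}\Phi}\|\tilde y-y^*(x)\|$, so $L_H=L_{\widehat{\nabla}^2_{xx}\Phi}$ suffices and no new work is needed.

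For the gradient part, I would argue directly by splitting $\widehat{\nabla}_x\Phi(x,y_1)-\widehat{\nabla}_x\Phi(x,y_2)$ through the add-and-subtract decomposition of the triple product $ABC$ with $A=\nabla^2_{xy}g$, $B=(\nabla^2_{yy}g)^{-1}$, $C=\nabla_y f$:
\[
A_1B_1C_1-A_2B_2C_2=(A_1-A_2)B_1C_1+A_2(B_1-B_2)C_1+A_2B_2(C_1-C_2).
\]
The remaining term $\nabla_xf(x,y_1)-\nabla_xf(x,y_2)$ is bounded by $\Lff\|y_1-y_2\|$, using the bound on the second derivatives of $f$. The pieces are controlled as follows:
\begin{itemize}
\item $\|B_i\|\le 1/\mu_g$ by Assumption~\ref{assumption:nonlinear2}.
\item $\|C_1\|\le \Lf$ and $\|A_2\|\le\Lgg$.
\item $\|A_1-A_2\|\le\Lggg\|y_1-y_2\|$ and $\|C_1-C_2\|\le\Lff\|y_1-y_2\|$, by the mean value theorem and the third/second derivative bounds.
\item For the inverse, $B_1-B_2=B_1(H_2-H_1)B_2$ gives $\|B_1-B_2\|\le\Lggg\|y_1-y_2\|/\mu_g^2$.
\end{itemize}
Summing yields
\[
L_G=\Lff+\frac{\Lggg\Lf}{\mu_g}+\frac{\Lgg\Lggg\Lf}{\mu_g^2}+\frac{\Lgg\Lff}{\mu_g}.
\]

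There is no real obstacle here. The only point requiring care is the identification of the exact hypergradient and hyper-Hessian with the hat quantities at $y^*(x)$, which relies on the implicit-function derivation already given. A second small point is that the inverse-difference bound needs the uniform bound $\|(\nabla^2_{yy}g)^{-1}\|\le 1/\mu_g$ at arbitrary $y$, not only at $y^*(x)$; strong convexity for every $(x,y)$ supplies this.
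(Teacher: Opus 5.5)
Your proposal is correct and follows essentially the same route as the paper. The paper also uses an add-and-subtract triangle-inequality split of $\widehat{\nabla}_x\Phi(x,\tilde y)-\widehat{\nabla}_x\Phi(x,y^*(x))$, perturbing $\nabla_y f$, then the inverse Hessian, then $\nabla^2_{xy}g$ (the reverse of your order), and it appeals directly to Lemma~\ref{lem:lip_xy} with $x_1=x_2=x$ to get $L_H=L_{\widehat{\nabla}^2_{xx}\Phi}$. Your constant $L_G$ is, if anything, more carefully derived than the paper's, which writes the lower-order bounds $\Lf,\Lg,\Lgg$ where the derivative bounds $\Lff,\Lggg$ belong and states the inverse-Hessian difference bound $2\Lgg/\mu_g^2$ without the factor $\|\tilde y-y^*(x)\|$.
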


\begin{proposition}\label{prop:lPhi}
    Under Assumptions \ref{assumption:general1}, \ref{assumption:nonlinear2} and \ref{assumption:general2}, the hyper-Hessian $\nabla_{xx}^2\Phi(x)$ is Lipschitz continuous with constant $L_{\nabla^2\Phi}$.
\end{proposition}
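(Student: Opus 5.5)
The plan is to reduce the statement to Lemma \ref{lem:lip_xy} together with Lipschitz continuity of the solution map $y^*(x)$. The key observation is that the exact hyper-Hessian is the approximate one evaluated at the exact lower level solution. Comparing the closed-form expression for $\nabla^2_{xx}\Phi(x)$ in Section \ref{sec:crn_for_bo} with definition \eqref{eq:def_approx_hess}, we see
\[
\nabla^2_{xx}\Phi(x)=\widehat{\nabla}^2_{xx}\Phi(x,y^*(x)).
\]
The same holds for the hypergradient: $\nabla_x\Phi(x)=\widehat{\nabla}_x\Phi(x,y^*(x))$. So the whole task is to control how $(x,y^*(x))$ moves as $x$ moves.

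\textbf{Step 1: $y^*$ is Lipschitz.} From the implicit-function formula
\[
\nabla_x y^*(x)=-\nabla^2_{xy}g(x,y^*(x))\bigl(\nabla^2_{yy}g(x,y^*(x))\bigr)^{-1},
\]
we bound $\|\nabla^2_{xy}g\|\le\Lgg$ by Assumption \ref{assumption:general2}. Strong convexity (Assumption \ref{assumption:nonlinear2}) gives $\|(\nabla^2_{yy}g)^{-1}\|\le 1/\mu_g$. Hence $\|\nabla_x y^*(x)\|\le \Lgg/\mu_g$. Integrating along the segment from $x_1$ to $x_2$ then gives
\[
\|y^*(x_1)-y^*(x_2)\|\le (\Lgg/\mu_g)\|x_1-x_2\|.
\]
The differentiability of $y^*$ needed here follows from the implicit function theorem, since $g$ is $C^3$ and $\nabla^2_{yy}g$ is invertible.

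\textbf{Step 2: combine with Lemma \ref{lem:lip_xy}.} Applying the joint Lipschitz bound of $\widehat{\nabla}^2_{xx}\Phi$ at the points $(x_1,y^*(x_1))$ and $(x_2,y^*(x_2))$ gives
\[
\|\nabla^2_{xx}\Phi(x_1)-\nabla^2_{xx}\Phi(x_2)\|\le L_{\widehat{\nabla}^2_{xx}\Phi}\bigl(\|x_1-x_2\|^2+\|y^*(x_1)-y^*(x_2)\|^2\bigr)^{1/2}.
\]
By Step 1 this is at most $L_{\widehat{\nabla}^2_{xx}\Phi}\sqrt{1+\Lgg^2/\mu_g^2}\,\|x_1-x_2\|$. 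We therefore take
\[
L_{\nabla^2\Phi}=L_{\widehat{\nabla}^2_{xx}\Phi}\sqrt{1+\Lgg^2/\mu_g^2}.
\]

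The main obstacle is hidden inside Lemma \ref{lem:lip_xy}, which we are allowed to assume. It requires showing that each product term in \eqref{eq:def_approx_hess} is jointly Lipschitz, which follows because each factor is bounded and Lipschitz. In particular, $H^{-1}$ is Lipschitz via $A^{-1}-B^{-1}=A^{-1}(B-A)B^{-1}$ with both inverses bounded by $1/\mu_g$. Given that lemma, the only remaining point needing care is the identification $\nabla^2_{xx}\Phi(x)=\widehat{\nabla}^2_{xx}\Phi(x,y^*(x))$. This identification is immediate from the formulas, so the proof should be short.
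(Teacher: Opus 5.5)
Your proposal is correct and matches the paper's proof: bound $\|\nabla_x y^*(x)\|\le \Lgg/\mu_g =: L_{y,1}$, identify $\nabla^2_{xx}\Phi(x)$ with $\widehat{\nabla}^2_{xx}\Phi(x,y^*(x))$, and apply Lemma~\ref{lem:lip_xy} at the points $(x_i,y^*(x_i))$. The only difference is in the constant. You keep the Euclidean norm and obtain the factor $\sqrt{1+L_{y,1}^2}$, whereas the paper bounds $\|(x_1,y^*(x_1))-(x_2,y^*(x_2))\|$ by the triangle inequality and sets $L_{\nabla^2\Phi}=L_{\widehat{\nabla}^2_{xx}\Phi}(1+L_{y,1})$, which is slightly looser and therefore implied by your bound.
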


The above analysis identifies a few key quantitative ingredients that are needed to adapt CRN to the bilevel setting. In particular, the approximate hypergradient and hyper-Hessian inherit errors due to the lower level inaccuracies, while the exact hyper-Hessian indeed remains Lipschitz continuous along the upper level trajectory. 
The next question is: How should one stipulate update rules for the lower level iterates so as to ensure the approximation errors are small enough for the entire inexact cubic regularized Newton approach to perform well? We shall address this question in the next section through a double loop construction.

\section{A Double Loop Cubic Regularized Newton Method}\label{sec:3}

Under the assumptions and preliminary results from the previous section, we apply the CRN method to bilevel optimization and propose a Double Loop Cubic Regularized Method (DLCRN, Algorithm \ref{alg:dl_crn_blo}) for the bilevel optimization problem in this section. This approach represents an adaptation of the inexact CRN method \citep{wang2018note} to the bilevel setting, and it is included as a diagnostic baseline. It shows what accuracy in the lower-level variable is sufficient for inexact CRN on the hyperfunction: the hypergradient error must scale quadratically with the CRN step, while the hyper-Hessian error only needs to scale linearly. This exposes the bottleneck that a single loop method must remove, which we will discuss later in Section~\ref{sec:gd_only}.

Applying CRN methods to bilevel optimization algorithms with AID-based estimates is directly related to the literature on inexact CRN methods for single level optimization problem, since for every outer iteration we can only obtain an approximate solution $\tilde y$ of the lower level objective, and hence the hypergradient and hyper-Hessian are inexact. Existing theoretical analysis for inexact CRN establish the optimal $\mathcal{O}(\varepsilon^{-1.5})$ convergence rate to an SOSP under the condition that the gradient and Hessian estimation errors satisfy:
\begin{align}
    \|\bm{G}_k - \nabla \Phi(x_k)\| \le \mathcal{O}(\|s_k\|^2), \qquad
    \|\bm{H}_k - \nabla^2\Phi(x_k)\| \le \mathcal{O}(\|s_k\|), \label{eq:ine_crn_con}
\end{align}
where $s_k$ is the CRN step at iteration $k$ (see Theorem 1 in \cite{wang2018note}).

Based on the theoretical framework in \cite{wang2018note}, we design a Double Loop algorithm (Algorithm \ref{alg:dl_crn_blo}).
This algorithm provides a theoretically transparent baseline for second order bilevel optimization. By explicitly solving the lower level problem to a prescribed accuracy at each outer iteration, the resulting approximate hypergradient and hyper-Hessian strictly satisfy the inexactness conditions in \eqref{eq:ine_crn_con}. This mechanism ensures that the algorithm naturally inherits the optimal outer loop complexity  $\mathcal{O}(\varepsilon^{-1.5})$ of the inexact CRN, establishing a principled benchmark that clarifies the exact relationship between inner loop precision and outer loop convergence. However, the inexact conditions \eqref{eq:ine_crn_con} are merely theoretical requirements, since they depend on $\|y_{k,t} - y^*(x_k)\|$ and are not directly verifiable. In practice, one can replace the stopping criteria by $\|\nabla_y g(x_k, y_{k, t})\| \le \mu_g\min\left\{\dfrac{a}{L_{\nabla\Phi}}\|s_k\|^2, \dfrac{b}{L_{\nabla^2\Phi}}\|s_k\|\right\}$.

\begin{algorithm}[h]\label{alg:dl_crn_blo}
\caption{A Double Loop Cubic Regularized Newton algorithm for BLO problem (DLCRN)}
\begin{algorithmic}
    \State {\bf Input:} iteration number $K$, stepsize $\beta$, CRN parameter $M$, constants $a, b \ge 0$
    \State {\bf Initialization:} initial point $x_1, y_1, s_1$
    \State \For {$k =1 \to K$}{
    
    \State $y_{k, 0} = y_k, t = 0$;
    \State \While{$\|\nabla_yg(x_k, y_{k, t})\| > \mu_g\min\left\{\dfrac{a}{L_{\nabla\Phi}}\|s_k\|^2, \dfrac{b}{L_{\nabla^2\Phi}}\|s_k\|\right\}$}{
        \State $t\gets t+1$;
        \State $y_{k, t} = y_{k, t-1} - \beta\nabla_y g(x_k, y_{k, t-1})$;
    }
    \State $N_k\gets t$;
    \State $y_{k+1} \gets y_{k, N_k}$;
    \State Obtain the approximate gradient and Hessian $\bm{G}_k = \widehat{\nabla}_x\Phi(x_k , y_{k+1}),\ \bm{H}_k = \widehat{\nabla}^2_{xx}\Phi(x_k, y_{k+1})$.
    \State Solve the CRN step \begin{align}
        s_{k+1} = \argmin_{s} \left\{\bm{G}_k^\top s + \dfrac{1}{2}s^\top \bm{H}_ks + \dfrac{M}{6}\|s\|^3 \right\}; \nonumber
    \end{align}
    \State $x_{k+1} = x_k + s_{k+1} $.
    }
    \State {\bf Output: $x_{\bar k}$, where $\bar k\in\arg\min_{2\le k\le K+1}
    \left\{
        \|s_{k}\|
    \right\}$.}
    
\end{algorithmic}
\end{algorithm}

In implementation, the cubic subproblem can be solved approximately by existing solvers, as we mentioned in Section~\ref{sec:literature}.  For example, \cite{carmon2019gradient} show that gradient descent can approximate the solution of the subproblem.

The complexity of the outer loop iteration of Algorithm \ref{alg:dl_crn_blo} is characterized in Proposition \ref{prop:conv_dl}. 
\begin{proposition}[Theorem 1 in \cite{wang2018note}]\label{prop:conv_dl}
Under Assumptions \ref{assumption:general1}, \ref{assumption:nonlinear2} and \ref{assumption:general2}, after $K$ outer iteration, there exist two universal constants such that, the sequence $\{x_k\}_{k=1}^K$ generated by Algorithm \ref{alg:dl_crn_blo} satisfies that there exists $k \in [1, \dots, K]$ such that\begin{align}
     \|\nabla \Phi(x_k)\| \le \dfrac{C_1}{(K-1)^{2/3}}, \qquad  \lambda_{\min}(\nabla^2 \Phi(x_k)) \ge -\dfrac{C_2}{(K-1)^{1/3}}. \nonumber
\end{align}
As a result, the number of outer loop iterations of Algorithm \ref{alg:dl_crn_blo} to obtain an $\varepsilon$-SOSP of $\Phi(x)$ is $K = \mathcal{O}(\varepsilon^{-1.5})$.
\end{proposition}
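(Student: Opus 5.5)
The plan is to reduce the statement to the inexact CRN result of \cite{wang2018note} (their Theorem 1) applied to the single-level function $\Phi$. That theorem needs two things: $\Phi$ has a Lipschitz Hessian and is bounded below, and the approximate gradient and Hessian satisfy the inexactness conditions \eqref{eq:ine_crn_con} with suitable constants. Hessian Lipschitzness is exactly Proposition~\ref{prop:lPhi}, with constant $L_{\nabla^2\Phi}$. Boundedness from below of $\Phi$ I would take as implicit in the setting; it is needed for the telescoping argument, and I would state it explicitly if necessary. We also choose $M$ large relative to $L_{\nabla^2\Phi}$ as in \cite{wang2018note}.

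The main work is to verify that the stopping rule of the inner loop enforces \eqref{eq:ine_crn_con}.

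\textbf{Step 1: from the residual to the lower-level error.} By the $\mu_g$-strong convexity of $g(x_k,\cdot)$ (Assumption~\ref{assumption:nonlinear2}) and $\nabla_y g(x_k,y^*(x_k))=0$,
\[
\mu_g\|y_{k+1}-y^*(x_k)\|\le\|\nabla_y g(x_k,y_{k+1})\|.
\]
Upon termination of the inner loop this gives
\[
\|y_{k+1}-y^*(x_k)\|\le \min\Bigl\{\tfrac{a}{L_{\nabla\Phi}}\|s_k\|^2,\ \tfrac{b}{L_{\nabla^2\Phi}}\|s_k\|\Bigr\}.
\]

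\textbf{Step 2: from the lower-level error to the derivative errors.} Lemma~\ref{lemma:GH_lip} then yields
\[
\|\bm G_k-\nabla\Phi(x_k)\|\le L_G\tfrac{a}{L_{\nabla\Phi}}\|s_k\|^2,\qquad
\|\bm H_k-\nabla^2\Phi(x_k)\|\le L_H\tfrac{b}{L_{\nabla^2\Phi}}\|s_k\|.
\]
These have exactly the form \eqref{eq:ine_crn_con}, with constants proportional to $a$ and $b$. We then pick $a,b$ small enough that they fall below the thresholds required in \cite{wang2018note}. One may identify $L_{\nabla\Phi}$ with $L_G$ up to normalization.

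\textbf{Step 3: termination of the inner loop.} Gradient descent with $\beta\le 1/\Lgg$ converges linearly on the strongly convex $g(x_k,\cdot)$, so the residual decreases geometrically. Hence the while loop terminates whenever $\|s_k\|>0$. If $s_k=0$, then already $\bm G_{k-1}=0$ and $\bm H_{k-1}\succeq 0$ (from the optimality conditions of the cubic subproblem), so the algorithm has stopped at a stationary point, and that edge case is handled separately.

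\textbf{Main obstacle: the index shift.} The error tolerance at iteration $k$ is measured by the previous step $s_k$, while the decrease is driven by the new step $s_{k+1}$. This is precisely the lagged form of the condition used in \cite{wang2018note}. I expect the care to lie in checking that their potential-function argument applies verbatim to this lag. The argument I would follow is:
\begin{itemize}
\item The cubic model decrease gives $\Phi(x_{k+1})\le\Phi(x_k)-c\|s_{k+1}\|^3+c'\|s_k\|^3$, where the error terms are absorbed using Step 2 and $c>c'$ once $a,b$ are small.
\item Summing over $k$ gives $\sum_k\|s_k\|^3\lesssim \Phi(x_1)-\inf\Phi+\|s_1\|^3$.
\item Therefore the minimal step satisfies $\min_k\|s_k\|\lesssim (K-1)^{-1/3}$.
\item The first-order optimality of the subproblem, $\bm G_k+\bm H_k s+\frac M2\|s\|s=0$, together with Step 2 and Proposition~\ref{prop:lPhi}, bounds $\|\nabla\Phi(x_{k+1})\|=\mathcal O(\|s_{k+1}\|^2+\|s_k\|^2)$.
\item The second-order condition $\bm H_k+\frac M2\|s\|I\succeq0$ gives $\lambda_{\min}(\nabla^2\Phi(x_{k+1}))\ge-\mathcal O(\|s_{k+1}\|+\|s_k\|)$.
\end{itemize}
Picking the index with both consecutive steps small (for instance, minimizing $\|s_k\|^3+\|s_{k+1}\|^3$) yields the rates $C_1/(K-1)^{2/3}$ and $C_2/(K-1)^{1/3}$. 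Setting these equal to $\varepsilon$ and $\sqrt\varepsilon$ gives $K=\mathcal O(\varepsilon^{-1.5})$.
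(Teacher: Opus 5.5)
Your proposal is correct and follows the paper's route. The paper simply invokes Theorem~1 of \cite{wang2018note}, with the stopping rule, strong convexity of $g(x_k,\cdot)$, Lemma~\ref{lemma:GH_lip} and Proposition~\ref{prop:lPhi} certifying the lagged inexactness conditions \eqref{eq:ine_crn_con}, which is exactly your Steps~1--2. Your explicit handling of the index shift (choosing the index that minimizes $\|s_k\|^3+\|s_{k+1}\|^3$) and your flagging of the implicit lower-boundedness of $\Phi$ are, if anything, more careful than the paper: its Appendix~\ref{sec:pf_of_dl} bounds the iteration-$(k-1)$ errors by $\|s_k\|$ rather than $\|s_{k-1}\|$, and its output rule minimizes only $\|s_k\|$.
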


We now provide the lower level complexity analysis in Theorem \ref{thm:dl_ll}, accounting for the inner loop iterations. The proof of Theorem \ref{thm:dl_ll} can be found in Appendix \ref{sec:pf_of_dl}.

\begin{theorem}\label{thm:dl_ll}
    Under the same assumptions in Proposition \ref{prop:conv_dl}, for any given $\varepsilon$, if we run Algorithm \ref{alg:dl_crn_blo} with $N_k$ satisfying
    $\|\nabla_y g(x_k,y_{k,N_k})\|\le\mu_g
    \min\left\{\dfrac{a}{L_{\nabla\Phi}}\|s_k\|^2,\,\dfrac{b}{L_{\nabla^2\Phi}}\|s_k\|
    \right\}$ for $k\ge 1$, then the output of the algorithm is an $\varepsilon$-SOSP. The lower level iteration complexity is $\tilde{\mathcal O}(\varepsilon^{-1.5})$, i.e. we have \begin{align}
        \sum_{k=1}^{K} N_k \le \mathcal{O}(\varepsilon^{-1.5}\log(\varepsilon^{-1}))\nonumber.
    \end{align}
\end{theorem}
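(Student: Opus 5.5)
The proof splits into two parts. First we check that the stopping rule makes the AID estimates satisfy the inexactness conditions \eqref{eq:ine_crn_con}, so that Proposition \ref{prop:conv_dl} gives the outer complexity. Second we count the gradient steps in each inner loop using linear convergence of gradient descent on the strongly convex lower level problem.

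\textbf{Step 1: the inexactness conditions.} By $\mu_g$-strong convexity of $g(x_k,\cdot)$ we have
\[
\|y - y^*(x_k)\|\le \mu_g^{-1}\|\nabla_y g(x_k,y)\|.
\]
When the inner loop stops, this gives
\[
\|y_{k+1}-y^*(x_k)\|\le \min\{a\|s_k\|^2/L_{\nabla\Phi},\, b\|s_k\|/L_{\nabla^2\Phi}\}.
\]
Lemma \ref{lemma:GH_lip} then yields $\|\bm G_k-\nabla\Phi(x_k)\|\le (L_G a/L_{\nabla\Phi})\|s_k\|^2$ and $\|\bm H_k-\nabla^2\Phi(x_k)\|\le (L_H b/L_{\nabla^2\Phi})\|s_k\|$. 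With $a,b$ chosen small enough, these are exactly \eqref{eq:ine_crn_con} with the constants required in \cite{wang2018note}. Proposition \ref{prop:conv_dl} then shows that $K=\mathcal O(\varepsilon^{-1.5})$ outer iterations suffice and that the output is an $\varepsilon$-SOSP. One technical point: in \cite{wang2018note} the output index is chosen by minimal step length, and the guarantee holds there. We will use that for $k<\bar k$ (before termination) one has $\|s_k\|\gtrsim \sqrt{\varepsilon}$, since otherwise the minimal step already certifies stationarity. Hence the output rule of Algorithm \ref{alg:dl_crn_blo} can stop at the first small step, and all earlier targets are at least of order $\varepsilon$.

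\textbf{Step 2: counting the inner iterations.} Take $\beta\le 1/\Lgg$. Gradient descent then contracts, $\|y_{k,t}-y^*(x_k)\|\le (1-\beta\mu_g)^t\|y_k-y^*(x_k)\|$. Since $\|\nabla_y g(x_k,y)\|\le \Lgg\|y-y^*(x_k)\|$, the stopping test holds once
\[
N_k \ge \frac{1}{\beta\mu_g}\log\frac{\Lgg\|y_k-y^*(x_k)\|}{\mu_g\,\tau_k},
\qquad
\tau_k=\min\{a\|s_k\|^2/L_{\nabla\Phi},\, b\|s_k\|/L_{\nabla^2\Phi}\}.
\]
This requires bounding the warm-start error $\|y_k-y^*(x_k)\|$. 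Since $y^*$ is Lipschitz with constant $\Lgg/\mu_g$ (from the formula for $\nabla_x y^*$),
\[
\|y_k-y^*(x_k)\|\le \|y_k-y^*(x_{k-1})\|+(\Lgg/\mu_g)\|s_k\|\le \tau_{k-1}+(\Lgg/\mu_g)\|s_k\|.
\]

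\textbf{Main obstacle.} The difficulty is the ratio $\|y_k-y^*(x_k)\|/\tau_k$. It is not bounded uniformly, because $\tau_k$ can be as small as order $\|s_k\|^2$. I would bound it by $\mathrm{poly}(1/\|s_k\|)$, using that $\|s_k\|$ is bounded above (CRN steps are bounded since $\bm G_k,\bm H_k$ are bounded under Assumption \ref{assumption:general2}) and, from Step 1, that $\|s_k\|\gtrsim\sqrt\varepsilon$ for every $k$ before the output. Together these give $\log(\|y_k-y^*(x_k)\|/\tau_k)=\mathcal O(\log\varepsilon^{-1})$. If $\tau_{k-1}$ is much smaller than $\tau_k$, the first term of the warm-start bound only helps, so the ratio is still at most order $1+\|s_k\|/\|s_k\|^2\lesssim \varepsilon^{-1/2}$. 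Summing $N_k=\mathcal O(\log\varepsilon^{-1})$ over $K=\mathcal O(\varepsilon^{-1.5})$ outer iterations gives $\sum_k N_k=\mathcal O(\varepsilon^{-1.5}\log(\varepsilon^{-1}))$. The first iteration contributes an additional constant that depends on $\|y_1-y^*(x_1)\|$.
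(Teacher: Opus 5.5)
Your proposal follows the paper's proof essentially step for step. The residual stopping rule, together with strong convexity and Lemma~\ref{lemma:GH_lip}, yields the inexactness conditions needed for Proposition~\ref{prop:conv_dl}; the lower bound $\|s_k\|\gtrsim\sqrt{\varepsilon}$ before termination keeps every inner threshold at least of order $\varepsilon$; and linear convergence of warm-started gradient descent with $\|y_k-y^*(x_k)\|\le\tau_{k-1}+L_{y,1}\|s_k\|$ makes each $N_k$ logarithmic. The only difference is the final summation: you bound each $N_k$ uniformly by $\mathcal{O}(\log(\varepsilon^{-1}))$ using an a priori bound on the CRN step (from boundedness of $\bm G_k,\bm H_k$), whereas the paper telescopes $\log(\delta_{k-1}/\delta_k)$ and applies Jensen's inequality with $\frac{1}{K}\sum_k\|s_k\|=\mathcal{O}(K^{-1/3})$ from the summability of $\|s_k\|^3$, and both routes give $\mathcal{O}(\varepsilon^{-1.5}\log(\varepsilon^{-1}))$.
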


Despite its optimal outer loop iteration complexity, the strict nested-loop structure of DLCRN severely limits its practical applicability. Some applications with short response intervals at the lower level, for example in robotics, urban control, etc \citep{Duijkeren2015RealTimeNF,houska2011auto} fundamentally preclude solving the lower level problem with high precision at every outer iteration step. Because DLCRN requires repeated lower level subproblem solving to satisfy theoretical error bounds, it fails to avoid the extra $\mathcal O(\log(\varepsilon^{-1}))$ complexity of solving the lower level, and it obtains a suboptimal total convergence rate. Its suboptimal rate and incompatibility for some applications motivate the design of our Single Loop Cubic Regularized Newton (SLCRN) algorithm in the following section, which bypasses the inner loop complexity to achieve second order convergence guarantees using only a two-step lower level update.

\section{A Single Loop Cubic Regularized Newton Method for Deterministic Bilevel Optimization}\label{sec:4}

Section~\ref{sec:3} showed that a double loop implementation of Cubic Regularized Newton already yields a deterministic $\mathcal O(\varepsilon^{-1.5})$ outer loop convergence rate for the bilevel problem. The remaining issue is the lower level cost: DLCRN requires solving the lower level problem to a shrinking accuracy at every outer iteration, which leads to the extra logarithmic factor in the total lower level complexity. This motivates the search for a genuine single loop second order method.

To address this issue, we propose the Single Loop Cubic Regularized Newton method (SLCRN), which replaces the inner loop by exactly two steps of lower level update per outer iteration and hence preserves the deterministic optimal $\mathcal O(\varepsilon^{-1.5})$ second order convergence rate. 

\subsection{The SLCRN Algorithm}\label{sec:slcrn_alg}

Here we state the SLCRN algorithm. At each outer loop iteration, the lower level takes one gradient step and one Newton step on the lower level objective. However, the point obtained by Newton step is used only to compute the approximate hypergradient instead of updating the next lower level iteration point, while the gradient step point is used for the approximate hyper-Hessian and lower level variable update. See Algorithm~\ref{alg:sl_crn_blo} for details.

\begin{algorithm}[h]\label{alg:sl_crn_blo}
\caption{A Single Loop Cubic Regularized Newton algorithm for BLO problem (SLCRN)}
\begin{algorithmic}
    \State {\bf Input:} iteration number $K$, stepsize $\beta$, CRN parameter $M$
    \State {\bf Initialization:} initial point $x_1, y_1$
    \For{$k=1,\ldots,K$}{
        \State $y_{k+1}\gets y_k-\beta \nabla_y g(x_k,y_k);$
        \State $\hat y_{k+1}\gets y_{k+1}-\bigl(\nabla_{yy}^2 g(x_k,y_{k+1})\bigr)^{-1}\nabla_y g(x_k,y_{k+1});$
        \State Obtain the approximate gradient and Hessian:
        \begin{align*}
            \bm G_k=\widehat{\nabla}_x\Phi(x_k,\hat y_{k+1}),
            \qquad
            \bm H_k=\widehat{\nabla}_{xx}^2\Phi(x_k,y_{k+1}).
        \end{align*}
        \State Solve the CRN step:
        \begin{align*}
            s_{k+1}\in\argmin_s\left\{
            \bm G_k^\top s+\frac{1}{2}s^\top \bm H_k s+\frac{M}{6}\|s\|^3
            \right\};
        \end{align*}
        \State $x_{k+1}\gets x_k+s_{k+1}$.
    }
    \State {\bf Output:} $x_{\bar k + 1}$, where $\bar k\in\arg\min_{1\le k\le K}
    \left\{
        \|\nabla_y g(x_k,y_k)\|^3+\|s_{k+1}\|^3
    \right\}$.
\end{algorithmic}
\end{algorithm}

SLCRN has three main features. First, it is a \emph{single loop} method. Second, it preserves the \emph{optimal rate} of CRN in terms of $\varepsilon$. Third, the algorithmic structure is \emph{simple}. Each iteration consists of one lower level gradient step to obtain $y_{k+1}$, one lower level Newton step to obtain $\hat y_{k+1}$, and one upper level CRN step to obtain $x_{k+1}$, so the method is also relatively \emph{efficient to tune} in practice, compared with double loop approaches.

In addition, the asymmetry in Algorithm~\ref{alg:sl_crn_blo} is deliberate. The propagated lower level point is $y_{k+1}$, which is obtained by a single gradient step. Starting from $y_{k+1}$, we compute a Newton step $\hat y_{k+1}$, but this point is used only to construct the approximate hypergradient. In contrast, the approximate hyper-Hessian is still evaluated at the propagated point $y_{k+1}$. This reflects the asymmetric accuracy requirement of inexact CRN: the gradient estimate must be more accurate than the Hessian estimate. We discuss it in the following Section \ref{sec:gd_only}.

\subsection{Why We Need a Newton Step}\label{sec:gd_only}

The need for a Newton refinement is best understood by comparing the first order and second order single loop settings.
In the first order method \citep{hong2022twotimescaleframeworkbileveloptimization}, the propagated lower level gradient step is sufficient because the tracking error enters the analysis through a Lyapunov function $\Phi(x_k)+\tau\|y_k-y^*(x_k)\|^2$.
A single gradient step contracts the lower level error linearly, i.e.\begin{align*}
    \|\bm G_k-\nabla\Phi(x_k)\|
    \lesssim
    \|y_k-y^*(x_k)\|,
\end{align*} and this is enough to support first order hypergradient descent on the hyperfunction.
However, the picture changes for CRN. The upper level method is driven by a cubic model whose gradient part is especially sensitive to approximation error. The inexact CRN condition requires
\begin{align*}
    \|\bm G_k-\nabla\Phi(x_k)\|
    \lesssim
    \|s_k\|^2,
    \qquad
    \|\bm H_k-\nabla^2\Phi(x_k)\|
    \lesssim
    \|s_k\|,
\end{align*}
which suggests that: the lower level approximation used in the hypergradient for CRN must be more accurate than in first order methods to preserve the optimal convergence rate. For the single loop design, the accuracy requirement can be translated to the lower level tracking error,
\begin{align}
    \|\bm G_k-\nabla\Phi(x_k)\|
    \lesssim
    \|y_k-y^*(x_k)\|^2,
    \qquad
    \|\bm H_k-\nabla^2\Phi(x_k)\|
    \lesssim
    \|y_k-y^*(x_k)\|. \label{eq:4.2_1}
\end{align}
This is the key structural difference between single loop first order methods and single loop CRN. Based on this observation, solely applying a first order method for the lower level update is insufficient since it yields only linear convergence, not the quadratic convergence we need here. On the other hand, a Newton step yields local quadratic convergence, and the Hessian-vector product is already required inside the computation of hypergradient, so it is natural to directly employ a Newton step to meet the requirement in \eqref{eq:4.2_1}.

\subsection{Newton Alone Is Not Enough}\label{sec:newton}

The Newton step in Algorithm~\ref{alg:sl_crn_blo} is only an extra point computation and is not used to advance the lower level iterate. 
The reason is that the Newton method only guarantees local convergence, as discussed in the literature \citep{lin2007trust,di2019gradient}: it converges quadratically once the iterate is sufficiently close to the exact lower level solution, but it is not globally stable. If the Newton point is propagated as the next iteration update, then the lower level trajectory may leave the stable tracking regime, and the resulting lower level tracking error might diverge. Consequently, the hypergradient and hyper-Hessian may be formed from poor lower level information, and the upper level cubic model can become unreliable.

This phenomenon is illustrated by the following one-dimensional example. Consider the bilevel problem
\begin{align}
\min_{x\in\RR} \quad \Phi(x) &= f(x, y^*(x)) = \dfrac{1}{2}x^2 + \dfrac{1}{2}y^*(x)^2\\
    \text{s.t.}\quad y^*(x)&= \argmin_{y\in\RR} \quad g(x, y) = \dfrac{1}{2}(y-x)^2 + t\left[y\arctan(y) - \dfrac{1}{2}\log(1 + y^2)\right], \nonumber
\end{align}
where $t > 0$ is a fixed parameter. Since $\nabla_{yy}^2 g(x, y) = 1 + \dfrac{t}{1 + y^2} > 1$,
the lower level objective is strongly convex in $y$.
We compare the proposed SLCRN update with the following variant:
\begin{itemize}
\item \textbf{SLCRN:} proposed Algorithm \ref{alg:sl_crn_blo};
\item \textbf{Newton-update variant:} take the Newton step as the next iteration lower level update, i.e. use one gradient step and one Newton step to obtain $y_{k+1}$, then use one CRN step to obtain $x_{k+1}.$
\end{itemize}

Figure~\ref{fig:counter_2} plots the upper level value $f(x_k, y_k)$ and the lower level value $g(x_k, y_k)$ versus the outer iteration number. With the initial point $x_0=1$, $y_0=10$, and $t=10$, SLCRN converges rapidly to $(0,0)$, whereas the Newton-update variant diverges. Also, Figure~\ref{fig:plt_ll_43} gives a geometric view of the lower level dynamics of the first few iterations. In Figure~\ref{fig:plt_ll_43}, the Newton step $\hat y_2$ falls beyond the stable contraction area, and updating the lower level variable on it leads to a much larger lower level tracking error.

\begin{figure}[h!]
    \centering
    \includegraphics[width=0.75\linewidth]{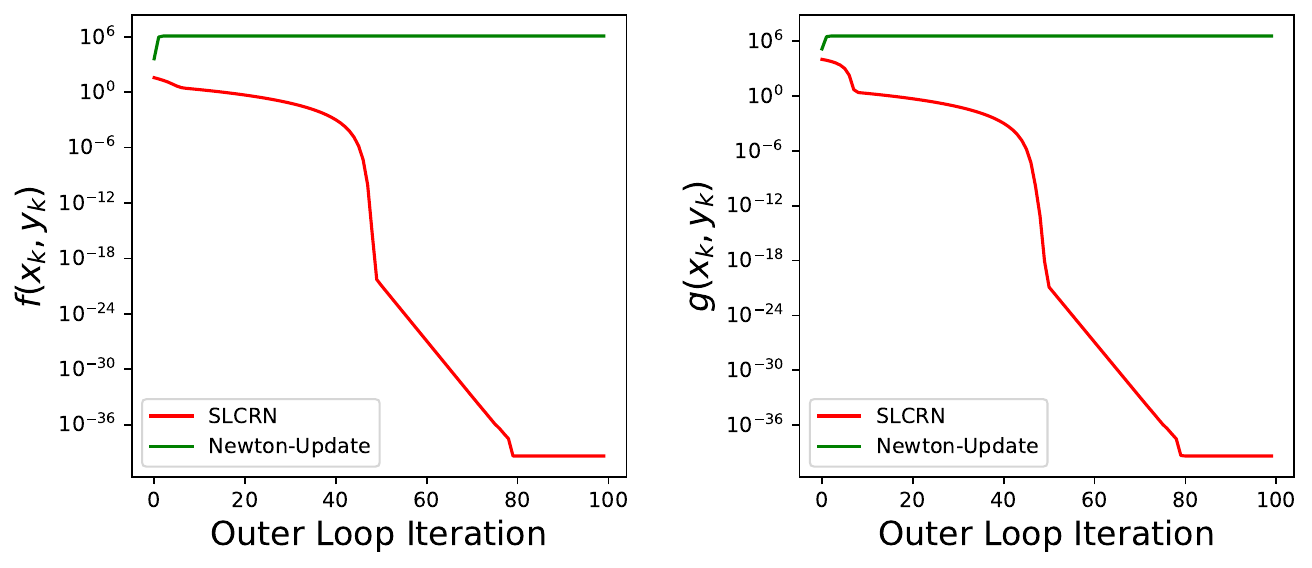}
    \caption{$f(x_k, y_k)$ (left) and $g(x_k, y_k)$ (right) versus the outer-loop iteration.}
    \label{fig:counter_2}
\end{figure}

\begin{figure}[h!]
    \centering
    \includegraphics[width=0.9\linewidth]{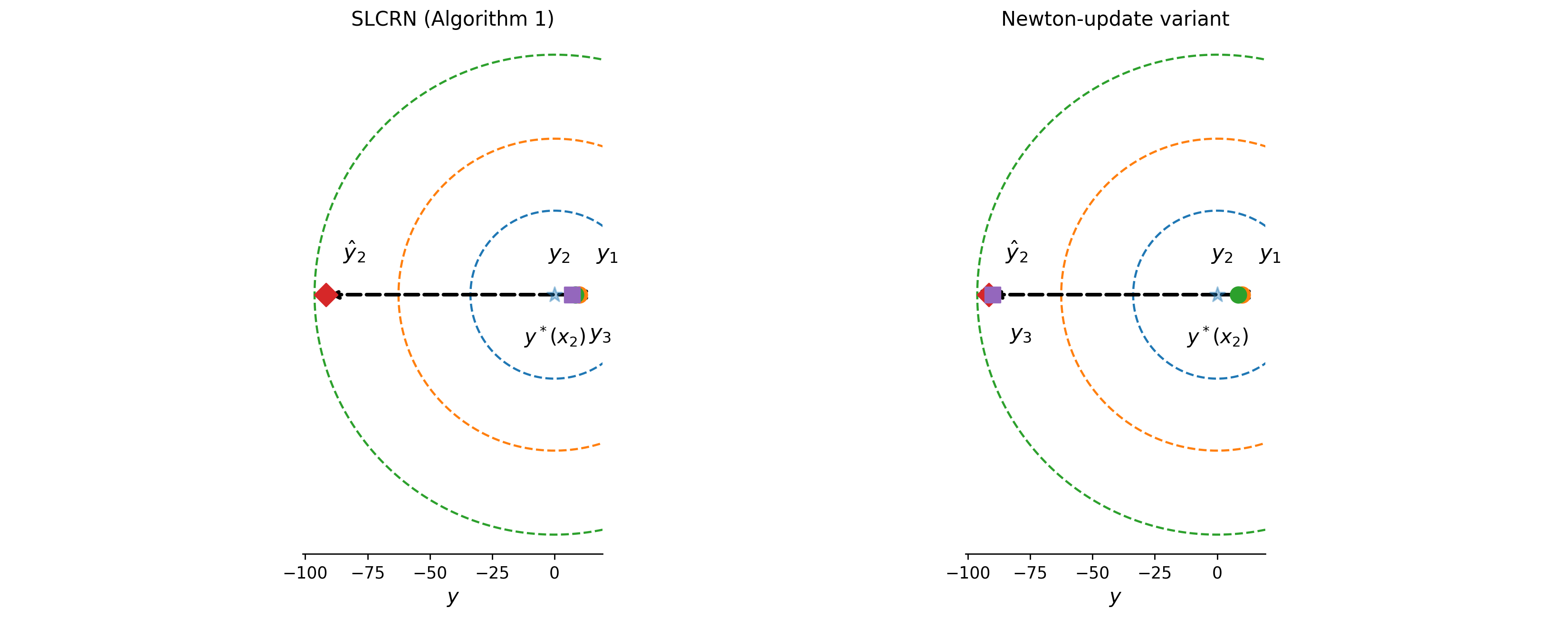}
    \caption{lower level updates of SLCRN and the Newton-update variant. On the right hand side, if we use the Newton point $\hat y_2$ to update, the next iteration lower level update $y_3$ has a much larger tracking error compared to the update scheme in Algorithm \ref{alg:sl_crn_blo}.}
    \label{fig:plt_ll_43}
\end{figure}

In summary, the architecture of SLCRN is explained in Sections~\ref{sec:gd_only} and \ref{sec:newton}. The lower level gradient step provides a stable single loop tracking contraction, and the Newton step is added because CRN requires a more accurate hypergradient than what gradient steps could provide. Finally, the Newton point is not used for updating because the Newton method could diverge and hence the Newton update scheme can be globally unstable. After the justifications in Sections~\ref{sec:gd_only} and \ref{sec:newton}, we are ready to present the main convergence result in Section~\ref{sec:slcrn_main}.

\subsection{Main Convergence Result}\label{sec:slcrn_main}

In this section, we provide a convergence analysis for SLCRN. In Theorem \ref{thm:main}, we formally present the oracle and iteration complexity of SLCRN in finding an $\varepsilon$-SOSP of $\Phi(x)$.

\begin{theorem}[Convergence of Algorithm \ref{alg:sl_crn_blo}]\label{thm:main}
Under Assumptions \ref{assumption:general1}, \ref{assumption:nonlinear2} and \ref{assumption:general2}, for Algorithm \ref{alg:sl_crn_blo}, if we set parameters as $\beta \le \dfrac{2}{\mu_g + \Lgg}$ and $M > M_0$, where $M_0$ is the right hand side of equation \cref{eq:satif_M}, the number of iterations required for Algorithm \ref{alg:sl_crn_blo} to obtain an $\varepsilon$-SOSP of $\Phi(x)$ is
        $ K= \mathcal{O}(\varepsilon^{-1.5})$. 
\end{theorem}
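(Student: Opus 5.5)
The proof combines a potential-function argument for the cubic step with a tracking recursion for $\delta_k:=\|y_k-y^*(x_k)\|$, and then telescopes a Lyapunov function of the form $\Phi(x_k)+\tau\,\delta_k^3$.

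\emph{Lower-level estimates.} With $\beta\le 2/(\mu_g+\Lgg)$, one gradient step contracts:
\begin{align*}
\|y_{k+1}-y^*(x_k)\|\le \rho\,\delta_k, \qquad \rho:=1-\beta\mu_g/2<1 .
\end{align*}
The Newton step from $y_{k+1}$ on the strongly convex, Hessian-Lipschitz function $g(x_k,\cdot)$ satisfies the standard global bound
\begin{align*}
\|\hat y_{k+1}-y^*(x_k)\|\le \frac{\Lggg}{2\mu_g}\|y_{k+1}-y^*(x_k)\|^2\le c\,\delta_k^2 .
\end{align*}
This bound holds without any locality condition, because the Newton map error is controlled by the integral form of Taylor's theorem. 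By Lemma \ref{lemma:GH_lip},
\begin{align*}
\|\bm G_k-\nabla\Phi(x_k)\|\le L_G c\,\delta_k^2, \qquad \|\bm H_k-\nabla^2\Phi(x_k)\|\le L_H\rho\,\delta_k .
\end{align*}
This is exactly the asymmetric requirement \eqref{eq:4.2_1}. For propagation, $y^*$ is $L_y$-Lipschitz with $L_y=\Lgg/\mu_g$, so
\begin{align*}
\delta_{k+1}\le \rho\,\delta_k+L_y\|s_{k+1}\|,
\end{align*}
and hence $\delta_{k+1}^3\le \rho'\delta_k^3+C\|s_{k+1}\|^3$ for some $\rho'\in(\rho^3,1)$, using $(a+b)^3\le(1+\eta)a^3+C_\eta b^3$.

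\emph{Descent of the cubic step.} Write $s=s_{k+1}$. The optimality conditions of the cubic subproblem are
\begin{align*}
\bm G_k+\bm H_k s+\tfrac M2\|s\|s=0, \qquad \bm H_k+\tfrac M2\|s\|I\succeq 0 .
\end{align*}
Together with Proposition \ref{prop:lPhi}, they give the usual estimate
\begin{align*}
\Phi(x_{k+1})\le \Phi(x_k)+\langle\nabla\Phi-\bm G_k,s\rangle+\tfrac12\langle(\nabla^2\Phi-\bm H_k)s,s\rangle-\Big(\tfrac M{12}-\tfrac{L_{\nabla^2\Phi}}6\Big)\|s\|^3 .
\end{align*}
The two error terms are bounded by $a\delta_k^2\|s\|+b\delta_k\|s\|^2$. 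Young's inequality with exponents $(3/2,3)$ and $(3,3/2)$ absorbs them into $\epsilon\|s\|^3+C_\epsilon\delta_k^3$.

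\emph{Telescoping.} Set $V_k=\Phi(x_k)+\tau\delta_k^3$. Choosing $\tau$ large enough that $\tau(1-\rho')$ dominates $C_\epsilon$, and then $M$ large (this defines $M_0$) so that the coefficient of $\|s\|^3$ stays positive after subtracting $\tau C+\epsilon$, gives
\begin{align*}
V_{k+1}\le V_k-\gamma\big(\|s_{k+1}\|^3+\delta_k^3\big).
\end{align*}
Summing and using that $\Phi$ is bounded below, the output index $\bar k$ satisfies $\|s_{\bar k+1}\|^3+\delta_{\bar k}^3\le (V_1-\inf\Phi)/(\gamma K)$. Since $\mu_g\delta_k\le\|\nabla_y g(x_k,y_k)\|\le \Lgg\delta_k$, the quantity $\delta_k^3$ is equivalent to the output criterion.

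\emph{Stationarity at $x_{\bar k+1}$.} From the first-order condition,
\begin{align*}
\|\nabla\Phi(x_{k+1})\|\le \|\nabla\Phi(x_{k+1})-\nabla\Phi(x_k)-\nabla^2\Phi(x_k)s\|+\|\nabla\Phi(x_k)-\bm G_k\|+\|(\nabla^2\Phi(x_k)-\bm H_k)s\|+\tfrac M2\|s\|^2 ,
\end{align*}
which is $\mathcal O(\|s\|^2+\delta_k^2)$. For the Hessian,
\begin{align*}
\nabla^2\Phi(x_{k+1})\succeq \bm H_k-\big(L_H\delta_k+L_{\nabla^2\Phi}\|s\|\big)I\succeq -\mathcal O(\|s\|+\delta_k)I .
\end{align*}
With $\|s\|,\delta_{\bar k}=\mathcal O(K^{-1/3})$, this yields $\|\nabla\Phi\|=\mathcal O(K^{-2/3})$ and $\lambda_{\min}\ge-\mathcal O(K^{-1/3})$, so $K=\mathcal O(\varepsilon^{-1.5})$. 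Each iteration uses a constant number of oracle calls, so the total oracle complexity is also $\mathcal O(\varepsilon^{-1.5})$.

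The main obstacle is the coupling between the Newton-point bound and the telescoping. The hypergradient error must be controlled by $\delta_k^2$ with a constant that does not blow up, so that after Young's inequality it can be paid by the $\delta^3$ contraction. Doing this requires using the cubic weight $\delta^3$ rather than the quadratic weight of first-order analyses, and carefully ordering the choices of $\epsilon$, $\tau$ and $M$.
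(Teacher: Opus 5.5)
Your proposal is correct and follows essentially the same route as the paper. The ingredients match one for one: the global Newton bound combined with the gradient-step contraction is fed into Lemma \ref{lemma:GH_lip}, Young/AM--GM converts the cross terms into $\delta_k^3$ and $\|s_{k+1}\|^3$, the cubic tracking recursion is the same, and you use the same Lyapunov function $\Phi(x_k)+\tau\|y_k-y^*(x_k)\|^3$, telescoping, output-criterion equivalence and stationarity estimates at $x_{\bar k+1}$. The only cosmetic difference is that the cubic optimality conditions actually give the sharper coefficient $\frac{3M-2L_{\nabla^2\Phi}}{12}$ used in the paper, so your $\frac{M}{12}-\frac{L_{\nabla^2\Phi}}{6}$ is valid but makes the resulting threshold on $M$ about three times larger than the paper's $M_0$.
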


The proof of Theorem \ref{thm:main} is relegated to Appendix \ref{sec:pf_of_main}. The argument combines the two roles of the lower level updates described above: the gradient step preserves a stable single loop tracking process, while the Newton step sharpens the lower level approximation at the point where the hypergradient is evaluated. Together, these updates yield an optimal deterministic $\mathcal O(\varepsilon^{-1.5})$ SOSP convergence rate without the extra logarithmic lower level cost of repeated inner solves.

\section{Numerical Experiments}

\nocite{chen2021cubic,luo2022finding}

In the previous section, we presented the convergence analysis of SLCRN and justify that some simple variants fail to guarantee the optimal convergence rate. In this section, we conclude with a numerical experiment that evaluates the efficiency of SLCRN in finding SOSPs compared to previous works. Following \cite{huang2025efficiently}, we carry out our experiment on the $d$–dimensional nonconvex–strongly-convex test function. More precisely, we consider \begin{align}
    \min_{x\in\RR^d}\quad \Phi(x) &:= f(x, y^*(x)) \nonumber\\
    \text{s.t.}\quad y^*(x) &= \argmin_{y\in\RR} g(x, y).\nonumber
\end{align}
The upper level function is defined as a piecewise function \begin{align}
f(x,y)=
\begin{cases}
f_{i,1}(x,y) &x_{1},\dots,x_{i-1}\in[2\tau,6\tau],x_{i}\in[0,\tau],x_{i+1},\dots,x_{d}\in[0,\tau],1\le i\le d-1;\\
f_{i,2}(x,y) &x_{1},\dots,x_{i-1}\in[2\tau,6\tau],x_{i}\in[\tau,2\tau],x_{i+1},\dots,x_{d}\in[0,\tau],1\le i\le d-1;\\
f_{d,1}(x,y) &x_{1},\dots,x_{d-1}\in[2\tau,6\tau], x_{d}\in[0,\tau]; \\
f_{d,2}(x,y) &x_{1},\dots,x_{d-1}\in[2\tau,6\tau],x_{d}\in[\tau,2\tau];\\
f_{d+1,1}(x,y) &x_{1},\dots,x_{d}\in[2\tau,6\tau],
\end{cases}\nonumber
\end{align}
where \begin{align}
f_{i,1}(x,y) &=\sum_{j=1}^{\,i-1} L\bigl(x_j-4\tau\bigr)^{2}-\gamma x_{i}^{2}+\sum_{j=i+1}^{d} L x_{j}^{2}-(i-1)\nu, & 1 \le i \le d-1, \nonumber\\
f_{i,2}(x,y) &=
\sum_{j=1}^{\,i-1} L\bigl(x_j-4\tau\bigr)^{2}+y+\sum_{j=i+2}^{d} L x_{j}^{2}
-(i-1)\nu,
& 1 \le i \le d-1, \nonumber\\
f_{d,1}(x,y) &=
\sum_{j=1}^{d-1} L\bigl(x_j-4\tau\bigr)^{2}
- \gamma x_{d}^{2}
- (d-1)\nu, \nonumber\\
f_{d,2}(x,y) &=
\sum_{j=1}^{\,d-1} L\bigl(x_j-4\tau\bigr)^{2}
\;+\;y
\;-\;(d-1)\nu, \nonumber\\
f_{d+1,1}(x,y) &=
\sum_{j=1}^{\,d} L\bigl(x_j-4\tau\bigr)^{2}
- d\nu,\nonumber
\end{align}
and the lower level problem is \begin{align}
    g(x, y) = \dfrac{y^2}{2} - h(x)y,\nonumber
\end{align}
where \begin{align}
g(x)&=\begin{cases}
h_{1}(x_{i}) + h_{2}(x_{i})\,x_{i+1}^{2}, &x_{1},\dots ,x_{i-1}\in[2\tau,6\tau],x_{i}\in[\tau,2\tau],x_{i+1},\dots,x_{d}\in[0,\tau],1\le i\le d-1;\nonumber\\
h_{1}(x_{d}), &x_{1},\dots ,x_{d-1}\in[2\tau,6\tau], x_{d}\in[\tau,2\tau];\nonumber\\
0, & \text{elsewhere},
\end{cases}\\
h_{1}(c) &= -\gamma c^{2}
          + \frac{(-14L + 10\gamma)(c-\tau)^{3}}{3\tau}
          + \frac{(5L - 3\gamma)(c-\tau)^{4}}{2\tau^{2}},\nonumber\\
h_{2}(c) &= -\gamma
          - \frac{10(L+\gamma)(c-2\tau)^{3}}{\tau^{3}}
          - \frac{15(L+\gamma)(c-2\tau)^{4}}{\tau^{4}}
          - \frac{ 6(L+\gamma)(c-2\tau)^{5}}{\tau^{5}},\nonumber
\end{align}
and the constants satisfy \begin{align}
    L > 0, \quad\gamma > 0, \quad\tau = e, \quad \nu = -h_1(2\tau) +4L\tau^2.\nonumber
\end{align}
The landscape of $\Phi$ contains $d$ strict saddles\begin{align}
    (0,\dots,0)^\top, \ (4\tau, 0, \dots, \tau)^\top, \ \cdots\ , \ (4\tau, \dots, 4\tau, 0)^\top,\nonumber
\end{align}
but a single global minimizer at $(4\tau,\ldots,4\tau)^{\top}$.

We test the efficiency of SLCRN on the problem above and compare it with several existing algorithms, including AID–BiO \citep{ji2021bilevel}, PBO \citep{huang2025efficiently}, iNEON \citep{huang2025efficiently}), PRAHGD \citep{yang2023accelerating}.
We run these algorithms on the above problem and denote AID-BiO in \cite{ji2021bilevel} by BO in the numerical plot. We specify the hyperparameters and settings of these algorithms as follows:

\textbf{SLCRN}:  
The lower level gradient has a fixed step size
$\beta=1$; the cubic regularization constant is $M=100$.

\textbf{PBO}: 
We follow the settings recommended by the authors: lower level stepsize
$\beta=1$ and perturbation radius $r=0.01$, inner loop iteration number $\mathcal{T}=5$, outer hyper-gradient descent
stepsize $0.05$.

\textbf{AID–BiO (BO in the figure)}:  
We use the same lower level step size $\beta=1$, inner loop iteration number $\mathcal{T}=5$, outer hyper-gradient descent
stepsize also $0.05$.

\textbf{iNEON}:
Identical lower level stepsize $\beta=1$, inner loop iteration number $\mathcal{T}=5$, outer hyper-gradient descent stepsize also $0.05$. The iNEON subroutine is triggered when the hyper-gradient norm is below $10^{-3}$, and we set the iNEON iteration number to $30$, the update stepsize to $0.1$, the initialization radius to $10^{-3}$, the maximum search radius to $5\times10^{-2}$, the curvature threshold to $10^{-4}$, and the negative-curvature step size to $5\times10^{-2}$.

\textbf{PRAHGD}:
The outer accelerated hyper-gradient update has stepsize $0.05$. We set the momentum parameter to $0.5$, the restart threshold parameter to $0.5$, the perturbation radius to $10^{-2}$, and the perturbation is triggered only at restart when the hyper-gradient norm is below $10^{-3}$.

All methods start from the same point
$x_{0}\in\mathcal{N}(0, I_d),y_{0}=1$ and run for $K=1000$ outer iterations. In Figure \ref{fig:plot_escape} we plot the outer loop function value gap $f(x_k, y_k) - \Phi^*$ versus the outer loop iteration. The results show that our method is faster to escape the saddle point and locate the SOSP of the hyperfunction than PBO and BO.
\begin{figure}[h]
    \centering
    \includegraphics[width=\linewidth]{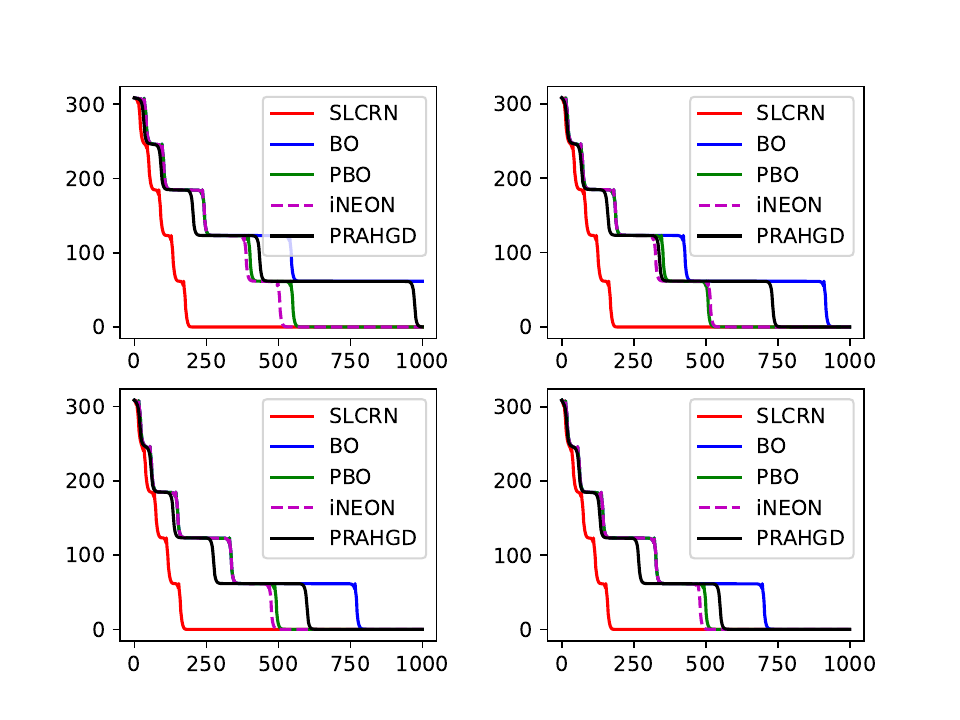}
    \caption{Comparison between Algorithm \ref{alg:sl_crn_blo} and PBO, BO. We start from 4 different initial points $x_0$ in each replication. The x-axis represents the outer loop iteration number, and the y-axis represents the upper level function value $f(x_k, y_k) - \Phi^*$.}
    \label{fig:plot_escape}
\end{figure}

\newpage
\bibliographystyle{abbrvnat} 
\bibliography{reference}

@article{luo2022finding,
  title={Finding Second-Order Stationary Points in Nonconvex-Strongly-Concave Minimax Optimization},
  author={Luo, Luo and Li, Yujun and Chen, Cheng},
  journal={Advances in Neural Information Processing Systems},
  volume={35},
  pages={36667--36679},
  year={2022}
}

@article{article,
author = {Klaricic Bakula, Milica},
year = {2018},
month = {11},
pages = {},
title = {Jensen–Steffensen inequality for strongly convex functions},
volume = {2018},
journal = {Journal of Inequalities and Applications},
doi = {10.1186/s13660-018-1897-2}
}

@article{labbe2016bilevel,
  title={Bilevel programming and price setting problems},
  author={Labb{\'e}, Martine and Violin, Alessia},
  journal={Annals of operations research},
  volume={240},
  pages={141--169},
  year={2016},
  publisher={Springer}
}

@article{zeng2024demonstrations,
  title={When demonstrations meet generative world models: A maximum likelihood framework for offline inverse reinforcement learning},
  author={Zeng, Siliang and Li, Chenliang and Garcia, Alfredo and Hong, Mingyi},
  journal={Advances in Neural Information Processing Systems},
  volume={36},
  year={2024}
}

@inproceedings{ji2021bilevel,
  title={Bilevel optimization: Convergence analysis and enhanced design},
  author={Ji, Kaiyi and Yang, Junjie and Liang, Yingbin},
  booktitle={International conference on machine learning},
  pages={4882--4892},
  year={2021},
  organization={PMLR}
}

@misc{shen2023penaltybasedbilevelgradientdescent,
      title={On Penalty-based Bilevel Gradient Descent Method}, 
      author={Han Shen and Quan Xiao and Tianyi Chen},
      year={2023},
      eprint={2302.05185},
      archivePrefix={arXiv},
      primaryClass={cs.LG},
      url={https://arxiv.org/abs/2302.05185}, 
}

@misc{ghadimi2018approximationmethodsbilevelprogramming,
      title={Approximation Methods for Bilevel Programming}, 
      author={Saeed Ghadimi and Mengdi Wang},
      year={2018},
      eprint={1802.02246},
      archivePrefix={arXiv},
      primaryClass={math.OC},
      url={https://arxiv.org/abs/1802.02246}, 
}

@article{hong2022twotimescaleframeworkbileveloptimization,
  title={A two-timescale stochastic algorithm framework for bilevel optimization: Complexity analysis and application to actor-critic},
  author={Hong, Mingyi and Wai, Hoi-To and Wang, Zhaoran and Yang, Zhuoran},
  journal={SIAM Journal on Optimization},
  volume={33},
  number={1},
  pages={147--180},
  year={2023},
  publisher={SIAM}
}

@inproceedings{franceschi2017forwardreversegradientbasedhyperparameter,
  title={Forward and Reverse Gradient-Based Hyperparameter Optimization},
  author={Luca Franceschi and Michele Donini and Paolo Frasconi and Massimiliano Pontil},
  booktitle={International Conference on Machine Learning},
  year={2017},
  url={https://api.semanticscholar.org/CorpusID:8026824}
}

@article{chen2022singletimescalemethodstochasticbilevel,
  title={A Single-Timescale Stochastic Bilevel Optimization Method},
  author={Tianyi Chen and Yuejiao Sun and Wotao Yin},
  journal={ArXiv},
  year={2021},
  volume={abs/2102.04671},
  url={https://api.semanticscholar.org/CorpusID:231855754}
}

@misc{lu2024firstorderpenaltymethodsbilevel,
      title={First-order penalty methods for bilevel optimization}, 
      author={Zhaosong Lu and Sanyou Mei},
      year={2024},
      eprint={2301.01716},
      archivePrefix={arXiv},
      primaryClass={math.OC},
      url={https://arxiv.org/abs/2301.01716}, 
}

@article{Silvrio2022ABO,
  title={A bi-objective optimization model for segment routing traffic engineering},
  author={Antonio Jos{\'e} Silv{\'e}rio and Rodrigo De Souza Couto and Miguel Elias M. Campista and Lu{\'i}s Henrique Maciel Kosmalski Costa},
  journal={Annals of Telecommunications},
  year={2022},
  volume={77},
  pages={813 - 824},
  url={https://api.semanticscholar.org/CorpusID:247004593}
}

@article{wang2018note,
  title={A Note on Inexact Condition for Cubic Regularized Newton's Method},
  author={Wang, Zhe and Zhou, Yi and Liang, Yingbin and Lan, Guanghui},
  journal={arXiv preprint arXiv:1808.07384},
  year={2018}
}

@book{nesterov2013introductory,
  title={Introductory lectures on convex optimization: A basic course},
  author={Nesterov, Yurii},
  volume={87},
  year={2013},
  publisher={Springer Science \& Business Media}
}

@article{jiang2024barrier,
  title={Barrier Function for Bilevel Optimization with Coupled Lower-Level Constraints: Formulation, Approximation and Algorithms},
  author={Jiang, Xiaotian and Li, Jiaxiang and Hong, Mingyi and Zhang, Shuzhong},
  journal={arXiv preprint arXiv:2410.10670},
  year={2024}
}

@article{nesterov2006cubic,
  title={Cubic regularization of Newton method and its global performance},
  author={Nesterov, Yurii and Polyak, Boris T},
  journal={Mathematical programming},
  volume={108},
  number={1},
  pages={177--205},
  year={2006},
  publisher={Springer}
}

@article{chayti2024improving,
  title={Improving Stochastic Cubic Newton with Momentum},
  author={Chayti, El Mahdi and Doikov, Nikita and Jaggi, Martin},
  journal={arXiv preprint arXiv:2410.19644},
  year={2024}
}

@article{chen2021cubic,
  title={A cubic regularization approach for finding local minimax points in nonconvex minimax optimization},
  author={Chen, Ziyi and Hu, Zhengyang and Li, Qunwei and Wang, Zhe and Zhou, Yi},
  journal={arXiv preprint arXiv:2110.07098},
  year={2021}
}

@article{huang2025efficiently,
  title={Efficiently escaping saddle points in bilevel optimization},
  author={Huang, Minhui and Chen, Xuxing and Ji, Kaiyi and Ma, Shiqian and Lai, Lifeng},
  journal={Journal of Machine Learning Research},
  volume={26},
  number={1},
  pages={1--61},
  year={2025}
}

@inproceedings{jiang2024krylov,
  title={Krylov Cubic Regularized Newton: A Subspace Second-Order Method with Dimension-Free Convergence Rate},
  author={Jiang, Ruichen and Raman, Parameswaran and Sabach, Shoham and Mokhtari, Aryan and Hong, Mingyi and Cevher, Volkan},
  booktitle={International Conference on Artificial Intelligence and Statistics},
  pages={4411--4419},
  year={2024},
  organization={PMLR}
}

@article{carmon2019gradient,
  title={Gradient descent finds the cubic-regularized nonconvex Newton step},
  author={Carmon, Yair and Duchi, John},
  journal={SIAM Journal on Optimization},
  volume={29},
  number={3},
  pages={2146--2178},
  year={2019},
  publisher={SIAM}
}

@article{nesterov2008accelerating,
  title={Accelerating the cubic regularization of Newton’s method on convex problems},
  author={Nesterov, Yu},
  journal={Mathematical Programming},
  volume={112},
  number={1},
  pages={159--181},
  year={2008},
  publisher={Springer}
}

@article{cartis2011adaptive,
  title={Adaptive cubic regularisation methods for unconstrained optimization. Part I: motivation, convergence and numerical results},
  author={Cartis, Coralia and Gould, Nicholas IM and Toint, Philippe L},
  journal={Mathematical Programming},
  volume={127},
  number={2},
  pages={245--295},
  year={2011},
  publisher={Springer}
}

@article{ghadimi2017second,
  title={Second-order methods with cubic regularization under inexact information},
  author={Ghadimi, Saeed and Liu, Han and Zhang, Tong},
  journal={arXiv preprint arXiv:1710.05782},
  year={2017}
}

@article{cartis2011adaptive2,
  title={Adaptive cubic regularisation methods for unconstrained optimization. Part II: worst-case function-and derivative-evaluation complexity},
  author={Cartis, Coralia and Gould, Nicholas IM and Toint, Philippe L},
  journal={Mathematical programming},
  volume={130},
  number={2},
  pages={295--319},
  year={2011},
  publisher={Springer}
}

@article{tripuraneni2018stochastic,
  title={Stochastic cubic regularization for fast nonconvex optimization},
  author={Tripuraneni, Nilesh and Stern, Mitchell and Jin, Chi and Regier, Jeffrey and Jordan, Michael I},
  journal={Advances in neural information processing systems},
  volume={31},
  year={2018}
}

@inproceedings{franceschi2018bilevel,
  title={Bilevel programming for hyperparameter optimization and meta-learning},
  author={Franceschi, Luca and Frasconi, Paolo and Salzo, Saverio and Grazzi, Riccardo and Pontil, Massimiliano},
  booktitle={International conference on machine learning},
  pages={1568--1577},
  year={2018},
  organization={PMLR}
}

@inproceedings{ge2015escaping,
  title={Escaping from saddle points—online stochastic gradient for tensor decomposition},
  author={Ge, Rong and Huang, Furong and Jin, Chi and Yuan, Yang},
  booktitle={Conference on learning theory},
  pages={797--842},
  year={2015},
  organization={PMLR}
}

@inproceedings{jin2017escape,
  title={How to escape saddle points efficiently},
  author={Jin, Chi and Ge, Rong and Netrapalli, Praneeth and Kakade, Sham M and Jordan, Michael I},
  booktitle={International conference on machine learning},
  pages={1724--1732},
  year={2017},
  organization={PMLR}
}

@article{jin2021nonconvex,
  title={On nonconvex optimization for machine learning: Gradients, stochasticity, and saddle points},
  author={Jin, Chi and Netrapalli, Praneeth and Ge, Rong and Kakade, Sham M and Jordan, Michael I},
  journal={Journal of the ACM (JACM)},
  volume={68},
  number={2},
  pages={1--29},
  year={2021},
  publisher={ACM New York, NY, USA}
}

@inproceedings{fang2019sharp,
  title={Sharp analysis for nonconvex sgd escaping from saddle points},
  author={Fang, Cong and Lin, Zhouchen and Zhang, Tong},
  booktitle={Conference on Learning Theory},
  pages={1192--1234},
  year={2019},
  organization={PMLR}
}

@inproceedings{finn2017model,
  title={Model-agnostic meta-learning for fast adaptation of deep networks},
  author={Finn, Chelsea and Abbeel, Pieter and Levine, Sergey},
  booktitle={International conference on machine learning},
  pages={1126--1135},
  year={2017},
  organization={PMLR}
}

@inproceedings{maclaurin2015gradient,
  title={Gradient-based hyperparameter optimization through reversible learning},
  author={Maclaurin, Dougal and Duvenaud, David and Adams, Ryan},
  booktitle={International conference on machine learning},
  pages={2113--2122},
  year={2015},
  organization={PMLR}
}

@article{colson2007overview,
  title={An overview of bilevel optimization},
  author={Colson, Beno{\^\i}t and Marcotte, Patrice and Savard, Gilles},
  journal={Annals of operations research},
  volume={153},
  number={1},
  pages={235--256},
  year={2007},
  publisher={Springer}
}

@article{colson2005bilevel,
  title={Bilevel programming: A survey},
  author={Colson, Beno{\^\i}t and Marcotte, Patrice and Savard, Gilles},
  journal={4or},
  volume={3},
  number={2},
  pages={87--107},
  year={2005},
  publisher={Springer}
}

@article{brotcorne2001bilevel,
  title={A bilevel model for toll optimization on a multicommodity transportation network},
  author={Brotcorne, Luce and Labb{\'e}, Martine and Marcotte, Patrice and Savard, Gilles},
  journal={Transportation science},
  volume={35},
  number={4},
  pages={345--358},
  year={2001},
  publisher={INFORMS}
}

@article{reisizadeh2025blur,
  title={BLUR: A Bi-Level Optimization Approach for LLM Unlearning},
  author={Reisizadeh, Hadi and Jia, Jinghan and Bu, Zhiqi and Vinzamuri, Bhanukiran and Ramakrishna, Anil and Chang, Kai-Wei and Cevher, Volkan and Liu, Sijia and Hong, Mingyi},
  journal={arXiv preprint arXiv:2506.08164},
  year={2025}
}

@inproceedings{fan2024challenging,
  title={Challenging forgets: Unveiling the worst-case forget sets in machine unlearning},
  author={Fan, Chongyu and Liu, Jiancheng and Hero, Alfred and Liu, Sijia},
  booktitle={European Conference on Computer Vision},
  pages={278--297},
  year={2024},
  organization={Springer}
}

@article{chakraborty2023parl,
  title={PARL: A unified framework for policy alignment in reinforcement learning from human feedback},
  author={Chakraborty, Souradip and Bedi, Amrit Singh and Koppel, Alec and Manocha, Dinesh and Wang, Huazheng and Wang, Mengdi and Huang, Furong},
  journal={arXiv preprint arXiv:2308.02585},
  year={2023}
}

@article{shen2025penalty,
  title={On penalty-based bilevel gradient descent method},
  author={Shen, Han and Xiao, Quan and Chen, Tianyi},
  journal={Mathematical Programming},
  pages={1--51},
  year={2025},
  publisher={Springer}
}

@article{kwon2023penalty,
  title={On penalty methods for nonconvex bilevel optimization and first-order stochastic approximation},
  author={Kwon, Jeongyeol and Kwon, Dohyun and Wright, Stephen and Nowak, Robert},
  journal={arXiv preprint arXiv:2309.01753},
  year={2023}
}

@inproceedings{chen2024finding,
  title={On finding small hyper-gradients in bilevel optimization: Hardness results and improved analysis},
  author={Chen, Lesi and Xu, Jing and Zhang, Jingzhao},
  booktitle={The Thirty Seventh Annual Conference on Learning Theory},
  pages={947--980},
  year={2024},
  organization={PMLR}
}

@article{chen2025set,
  title={Set Smoothness Unlocks Clarke Hyper-stationarity in Bilevel Optimization},
  author={Chen, He and Li, Jiajin and So, Anthony Man-cho},
  journal={arXiv preprint arXiv:2506.04587},
  year={2025}
}

@article{jiang2025correspondence,
  title={A Correspondence-Driven Approach for Bilevel Decision-making with Nonconvex Lower-Level Problems},
  author={Jiang, Xiaotian and Li, Jiaxiang and Hong, Mingyi and Zhang, Shuzhong},
  journal={arXiv preprint arXiv:2509.01148},
  year={2025}
}

@article{bolte2025bilevel,
  title={Bilevel gradient methods and Morse parametric qualification},
  author={Bolte, J{\'e}r{\^o}me and Le, Quoc-Tung and Pauwels, Edouard and Vaiter, Samuel},
  journal={arXiv preprint arXiv:2502.09074},
  year={2025}
}

@article{xiao2024unlocking,
  title={Unlocking Global Optimality in Bilevel Optimization: A Pilot Study},
  author={Xiao, Quan and Chen, Tianyi},
  journal={arXiv preprint arXiv:2408.16087},
  year={2024}
}

@article{jiang2025discretization,
  title={A Discretization Approach for Bilevel Optimization with Low-Dimensional and Non-Convex Lower-Level},
  author={Jiang, Xiaotian and Tsaknakis, Ioannis and Khanduri, Prashant and Hong, Mingyi},
  journal={arXiv preprint arXiv:2505.10830},
  year={2025}
}

@article{liu2022bome,
  title={Bome! bilevel optimization made easy: A simple first-order approach},
  author={Liu, Bo and Ye, Mao and Wright, Stephen and Stone, Peter and Liu, Qiang},
  journal={Advances in neural information processing systems},
  volume={35},
  pages={17248--17262},
  year={2022}
}

@article{zhang2022drsom,
  title={DRSOM: A dimension reduced second-order method and preliminary analyses},
  author={Zhang, Chuwen and Ge, Dongdong and Jiang, Bo and Ye, Yinyu},
  journal={arXiv preprint arXiv:2208.00208},
  year={2022}
}

@article{he2025homogeneous,
  title={Homogeneous second-order descent framework: a fast alternative to Newton-type methods},
  author={He, Chang and Jiang, Yuntian and Zhang, Chuwen and Ge, Dongdong and Jiang, Bo and Ye, Yinyu},
  journal={Mathematical Programming},
  pages={1--62},
  year={2025},
  publisher={Springer}
}

@article{zhang2025homogeneous,
  title={A Homogeneous Second-Order Descent Method for Nonconvex Optimization},
  author={Zhang, Chuwen and He, Chang and Jiang, Yuntian and Xue, Chenyu and Jiang, Bo and Ge, Dongdong and Ye, Yinyu},
  journal={Mathematics of Operations Research},
  year={2025},
  publisher={INFORMS}
}

@article{jiang2023beyond,
  title={Beyond Nonconvexity: A Universal Trust-Region Method with New Analyses},
  author={Jiang, Yuntian and He, Chang and Zhang, Chuwen and Ge, Dongdong and Jiang, Bo and Ye, Yinyu},
  journal={arXiv e-prints},
  pages={arXiv--2311},
  year={2023}
}

@inproceedings{yin2022bm,
  title={Bm-nas: Bilevel multimodal neural architecture search},
  author={Yin, Yihang and Huang, Siyu and Zhang, Xiang},
  booktitle={Proceedings of the AAAI Conference on Artificial Intelligence},
  volume={36},
  number={8},
  pages={8901--8909},
  year={2022}
}

@article{tu2024efficient,
  title={Efficient architecture search via bi-level data pruning},
  author={Tu, Chongjun and Ye, Peng and Lin, Weihao and Ye, Hancheng and Yu, Chong and Chen, Tao and Li, Baopu and Ouyang, Wanli},
  journal={IEEE Transactions on Circuits and Systems for Video Technology},
  year={2024},
  publisher={IEEE}
}

@article{huang2022cubic,
  title={Cubic regularized Newton method for the saddle point models: A global and local convergence analysis},
  author={Huang, Kevin and Zhang, Junyu and Zhang, Shuzhong},
  journal={Journal of Scientific Computing},
  volume={91},
  number={2},
  pages={60},
  year={2022},
  publisher={Springer}
}

@article{huang2025approximation,
  title={An approximation-based regularized extra-gradient method for monotone variational inequalities},
  author={Huang, Kevin and Zhang, Shuzhong},
  journal={SIAM Journal on Optimization},
  volume={35},
  number={3},
  pages={1469--1497},
  year={2025},
  publisher={SIAM}
}

@article{xian2025escaping,
  title={Escaping saddle point efficiently in minimax and bilevel optimizations},
  author={Xian, Wenhan and Huang, Feihu and Huang, Heng},
  year={2025}
}

@article{yang2023accelerating,
  title={Accelerating inexact hypergradient descent for bilevel optimization},
  author={Yang, Haikuo and Luo, Luo and Li, Chris Junchi and Jordan, Michael I},
  journal={arXiv preprint arXiv:2307.00126},
  year={2023}
}

@article{Duijkeren2015RealTimeNF,
  title={Real-Time NMPC for Semi-Automated Highway Driving of Long Heavy Vehicle Combinations},
  author={Niels van Duijkeren and Tam{\'a}s Keviczky and Peter Nilsson and Leo Laine},
  journal={IFAC-PapersOnLine},
  year={2015},
  volume={48},
  pages={39-46},
  url={https://api.semanticscholar.org/CorpusID:55844116}
}

@article{houska2011auto,
  title={An auto-generated real-time iteration algorithm for nonlinear MPC in the microsecond range},
  author={Houska, Boris and Ferreau, Hans Joachim and Diehl, Moritz},
  journal={Automatica},
  volume={47},
  number={10},
  pages={2279--2285},
  year={2011},
  publisher={Elsevier}
}

@article{he2025history,
  title={History-aware adaptive high-order tensor regularization},
  author={He, Chang and Jiang, Bo and Jiang, Yuntian and Zhang, Chuwen and Zhang, Shuzhong},
  journal={arXiv preprint arXiv:2511.05788},
  year={2025}
}

@inproceedings{lin2007trust,
  title={Trust region newton methods for large-scale logistic regression},
  author={Lin, Chih-Jen and Weng, Ruby C and Keerthi, S Sathiya},
  booktitle={Proceedings of the 24th international conference on Machine learning},
  pages={561--568},
  year={2007}
}

@inproceedings{di2019gradient,
  title={A gradient-based globalization strategy for the Newton method},
  author={di Serafino, Daniela and Toraldo, Gerardo and Viola, Marco},
  booktitle={International Conference on Numerical Computations: Theory and Algorithms},
  pages={177--185},
  year={2019},
  organization={Springer}
}

@article{yang2026second,
  title={Second-Order Bilevel Optimization with Accelerated Convergence Rates},
  author={Yang, Sheng and Liu, Chengchang and Chen, Lesi and Lui, John},
  journal={arXiv preprint arXiv:2605.06431},
  year={2026}
}

\appendix

\section{Proof of Results in Section \ref{sec:2}}

\subsection{Proof of Lemma \ref{lem:lip_xy}}\label{sec:pf_lip_xy}
\begin{proof}
    Let $H_1 = \nabla_{yy}^2g(x_1, y_1), H_2 = \nabla_{yy}^2g(x_2, y_2)$, and let \begin{align}
        \Delta_1 &= \|\nabla^2_{xx}f(x_1, y_1) - \nabla^2_{xx}f(x_2, y_2)\|,\nonumber\\
        \Delta_2 &= \|\nabla^2_{xy}g(x_1, y_1)H_1^{-1}\nabla^2_{xy}f(x_1, y_1) - \nabla^2_{xy}g(x_2, y_2)H_2^{-1}\nabla^2_{xy}f(x_2, y_2)\|,\nonumber\\
        \Delta_3 &= \|\nabla^2_{xy}g(x_1, y_1)H_1^{-1}\left(\nabla^2_{xy}f(x_1, y_1) - \nabla^2_{xy}g(x_1, y_1)H_1^{-1}\nabla^2_{yy}f(x_1, y_1)\right)\notag\\&\quad - \nabla^2_{xy}g(x_2, y_2)H_2^{-1}\left(\nabla^2_{xy}f(x_2, y_2) - \nabla^2_{xy}g(x_2, y_2)H_2^{-1}\nabla^2_{yy}f(x_2, y_2)\right)\|,\nonumber\\
        \Delta_4 &= \|\nabla^2_{xy}g(x_1, y_1)\left(H_1^{-1}\nabla_{yxy} g(x_1, y_1)H_1^{-1}\right)\nabla_yf(x_1, y_1)\nonumber\\&\quad - \nabla^2_{xy}g(x_2, y_2)\left(H_2^{-1}\nabla_{yxy} g(x_2, y_2)H_2^{-1}\right)\nabla_yf(x_2, y_2)\|,\nonumber\\
        \Delta_5 &= \|\left(\nabla^3_{xxy}g(x_1, y_1) -\nabla^2_{xy}g(x_1,y_1)H_1^{-1}\nabla^3_{yxy}g(x_1, y_1)\right)H_1^{-1}\nabla_yf(x_1, y_1) \notag\\
        &\quad- \left(\nabla^3_{xxy}g(x_2, y_2) -\nabla^2_{xy}g(x_2,y_2))H_2^{-1}\nabla^3_{yxy}g(x_2, y_2)\right)H_2^{-1}\nabla_yf(x_2, y_2)\|.\nonumber
    \end{align}
    From Assumption \ref{assumption:general2} and \ref{assumption:nonlinear2} we know that \begin{align}
        \|\left(\nabla_{yy}^2g(x, y)\right)^{-1}\|&\le \mu_g^{-1}, \nonumber\\
        \|H_1^{-1} - H_2^{-1}\| &= \|H_1^{-1}(H_2 - H_1)H_2^{-1}\| \le \dfrac{\|H_2 - H_1\|}{\|H_1H_2\|} \le \dfrac{L_{g, 3}}{\mu_g^2},\nonumber\\
        \left\|\dfrac{\partial H_1}{\partial x} - \dfrac{\partial H_2}{\partial x}\right\|&\le \|\partial_xH(x_1, y_1) - \partial_xH(x_2, y_2)\| + \|\partial_xy_1\partial_yH(x_1, y_1) - \partial_xy_2\partial_yH(x_2, y_2)\|\nonumber\\
        &= \|\nabla^3_{xyy}g(x_1, y_1) - \nabla^3_{xyy}g(x_2, y_2)\|\notag\\
        &+\|\nabla^2_{xy}g(x_1,y_1) H^{-1}_1\nabla^3_{yyy}g(x_1, y_1) - \nabla^2_{xy}g(x_2,y_2) H^{-1}_2\nabla^3_{yyy}g(x_2, y_2)\|\notag\\
        &\le \|\nabla^3_{xyy}g(x_1, y_1) - \nabla^3_{xyy}g(x_2, y_2)\|\notag\\
        &+\|\nabla^2_{xy}g(x_1,y_1) - \nabla^2_{xy}g(x_2,y_2)\|\|H^{-1}_1\nabla^3_{yyy}g(x_1, y_1)\|\notag\\
        &+\|\nabla^2_{xy}g(x_2,y_2)\|\|H^{-1}_1 - H^{-1}_2\|\|\nabla^3_{yyy}g(x_1, y_1)\|\notag\\
        &+\|\nabla^2_{xy}g(x_2,y_2)H^{-1}_2\|\|\nabla^3_{yyy}g(x_1, y_1) - \nabla^3_{yyy}g(x_2, y_2)\|\notag\\
        &\le \underbrace{\left(\Lgggg + \Lggg\frac{1}{\mu_g}\Lggg+\Lgg\frac{\Lggg}{\mu_g^2}\Lggg + \Lgg\frac{1}{\mu_g}\Lgggg\right)}_{L_{H^{-1}}}\|(x_1,y_1) - (x_2,y_2)\|. \nonumber
    \end{align}
    Thus, we could bound them separately. 
    
    \begin{align}
        \Delta_1 &\le \underbrace{\Lfff}_{L_{\Phi, 1}}\|(x_1, y_1) - (x_2, y_2)\|\nonumber,\\
        \Delta_2 &\le \|\nabla^2_{xy}g(x_1, y_1) - \nabla^2_{xy}g(x_2, y_2)\|\|H_1^{-1}\nabla^2_{xy}f(x_1, y_1)\| \notag\\
        &\quad+ \|\nabla^2_{xy}g(x_2, y_2)\|\|H_1^{-1} - H_2^{-1}\|\|\nabla^2_{xy}f(x_1, y_1)\| \notag\\
        &\quad+ \|\nabla^2_{xy}g(x_2, y_2)H_2^{-1}\|\|\nabla^2_{xy}f(x_1, y_1) - \nabla^2_{xy}f(x_2, y_2)\|\notag\\
        &\le \underbrace{\left(\Lggg\frac{1}{\mu_g}\Lff + \Lgg\frac{\Lggg}{\mu_g^2}\Lff + \Lgg\frac{1}{\mu_g}\Lfff\right)}_{L_{\Phi,2}}\|(x_1, y_1) - (x_2, y_2)\|\nonumber,\\
        \Delta_3 &\le \|\nabla^2_{xy}g(x_1, y_1) - \nabla^2_{xy}g(x_2, y_2)\|\cdot\|H_1^{-1}\left(\nabla^2_{xy}f(x_1, y_1) - \nabla^2_{xy}g(x_1, y_1)H_1^{-1}\nabla^2_{yy}f(x_1, y_1)\right)\| \notag\\
        &\quad+ \|\nabla^2_{xy}g(x_2, y_2)\|\|H_1^{-1} - H_2^{-1}\|\|\left(\nabla^2_{xy}f(x_1, y_1) - \nabla^2_{xy}g(x_1, yy_1)H_1^{-1}\nabla^2_{yy}f(x_1, y_1)\right)\|\notag\\
        &\quad+ \|\nabla^2_{xy}g(x_2, y_2)H_2^{-1}\|\cdot[\|\nabla^2_{xy}f(x_1, y_1) - \nabla^2_{xy}f(x_2, y_2)\| \notag\\
        &\quad+ \|\nabla^2_{xy}g(x_1, y_1)H_1^{-1}\nabla^2_{yy}f(x_1, y_1) - \nabla^2_{xy}g(x_2, y_2)H_2^{-1}\nabla^2_{yy}f(x_2, y_2)\|] \notag\\ 
        &\le \underbrace{(\dfrac{\Lggg}{\mu_g}(\Lff + \frac{\Lgg}{\mu_g}\Lff) + \Lgg\dfrac{\Lggg}{\mu_g^2}(\Lff + \frac{\Lgg}{\mu_g}\Lff)+ \frac{\Lgg}{\mu_g}(\Lfff + (\frac{\Lggg}{\mu_g}\Lff + \Lgg\frac{\Lggg}{\mu_g^2}\Lff + \frac{\Lgg}{\mu_g}\Lfff)))}_{L_{\Phi,3}} \notag\\
        &\quad\|(x_1, y_1) - (x_2, y_2)\|\nonumber,\\
        \Delta_4 &\le \|\nabla^2_{xy}g(x_1, y_1) - \nabla^2_{xy}g(x_2, y_2)\|\cdot\|\left(H_1^{-1}\nabla_{yxy} g(x_1, y_1)H_1^{-1}\right)\nabla_yf(x_1, y_1)\|\notag\\
        &\quad+ \|\nabla^2_{xy}g(x_2, y_2)\|\cdot\|H_1^{-1} - H_2^{-1}\|\cdot\|\left(\nabla_{yxy} g(x_1, y_1)H_1^{-1}\right)\nabla_yf(x_1, y_1)\|\notag\\
        &\quad+ \|\nabla^2_{xy}g(x_2, y_2)H_2^{-1}\|\cdot\|\nabla_{yxy} g(x_1, y_1) - \nabla_{yxy} g(x_2, y_2)\|\cdot\|\left(H_1^{-1}\right)\nabla_yf(x_1, y_1)\|\notag\\
        &\quad+ \|\nabla^2_{xy}g(x_2, y_2)\left(H_2^{-1}\nabla_{yxy} g(x_2, y_2)\right)\|\cdot\|H_1^{-1} - H_2^{-1}\|\cdot\|\nabla_yf(x_1, y_1)\|\notag\\
        &\quad+ \|\nabla^2_{xy}g(x_2, y_2)\left(H_2^{-1}\nabla_{yxy} g(x_2, y_2)H_2^{-1}\right)\|\cdot\|\nabla_yf(x_1, y_1) - \nabla_yf(x_2, y_2)\|\notag\\
        &\le \underbrace{\left(\Lggg\dfrac{\Lggg}{\mu_g^2}\Lf + \Lgg\dfrac{\Lggg}{\mu_g^2}\dfrac{\Lggg}{\mu_g}\Lf + \dfrac{\Lgg}{\mu_g}\Lgggg\dfrac{\Lf}{\mu_g} + \Lgg\dfrac{\Lggg}{\mu_g}\dfrac{\Lggg}{\mu_g^2}\Lf + \Lgg\dfrac{\Lggg}{\mu_g^2}\Lff\right)}_{L_{\Phi,4}}\notag\\
        &\qquad\cdot\|(x_1, y_1) - (x_2, y_2)\|\nonumber,\\
        \Delta_5 &\le \|\nabla^3_{xxy}g(x_1, y_1) - \nabla^3_{xxy}g(x_2, y_2)\|\cdot\|H_1^{-1}\nabla_yf(x_1, y_1)\|\notag\\
        &\quad+ \|\nabla^2_{xy}g(x_1,y_1)H_1^{-1}\nabla^3_{yxy}g(x_1, y_1) - \nabla^2_{xy}g(x_2,y_2)H_2^{-1}\nabla^3_{yxy}g(x_2, y_2)\|\cdot\|H_1^{-1}\nabla_yf(x_1, y_1)\|\notag\\
        &\quad+ \|\left(\nabla^3_{xxy}g(x_2, y_2) -\nabla^2_{xy}g(x_2,y_2)H_2^{-1}\nabla^3_{yxy}g(x_2, y_2)\right)\|\cdot\|H_1^{-1} - H_2^{-1}\|\cdot\|\nabla_yf(x_1, y_1)\|\notag\\
        &\quad+\|\left(\nabla^3_{xxy}g(x_2, y_2) -\nabla^2_{xy}g(x_2,y_2)H_2^{-1}\nabla^3_{yxy}g(x_2, y_2)\right)H_2^{-1}\|\cdot\|\nabla_yf(x_1, y_1) - \nabla_yf(x_2, y_2)\|\notag\\
        &\le \underbrace{\left((\Lgggg + \Lggg\dfrac{\Lggg}{\mu_g} + \Lgg\dfrac{\Lggg}{\mu_g^2}\Lggg + \dfrac{\Lgg}{\mu_g}\Lgggg)\dfrac{\Lf}{\mu_g} + (\Lggg+\Lgg\dfrac{\Lggg}{\mu_g})\frac{\Lggg}{\mu_g^2}\Lf
        (\Lggg+\Lgg\dfrac{\Lggg}{\mu_g})\dfrac{\Lff}{\mu_g}\right)}_{L_{\Phi,5}}\notag\\&\quad\|(x_1, y_1) - (x_2, y_2)\|\nonumber.
    \end{align}
    By letting $L_{\widehat{\nabla}^2_{xx}\Phi} = L_{\Phi,1} + L_{\Phi,2} + L_{\Phi,3} + L_{\Phi,4} + L_{\Phi,5}$, we have \begin{align}
        \|\widehat{\nabla}^2_{xx}\Phi(x_1, y_1) - \widehat{\nabla}^2_{xx}\Phi(x_2, y_2)\| \le L_{\widehat{\nabla}^2_{xx}\Phi} \|(x_1, y_1) - (x_2, y_2)\|.\nonumber
    \end{align}
\end{proof}

\subsection{Proof of Lemma \ref{lemma:GH_lip}}\label{sec:pf_GHPhi}
\begin{proof}
Denote $H = \nabla^2_{yy}g(x, y^*(x))$ and $\tilde{H} = \nabla^2_{yy}g(x, \tilde{y}(x))$. First, let us estimate the gradient error:  
\begin{align}
    &\quad \|\bm{G} - \nabla_x\Phi(x)\| \notag\\
    &= \|\nabla_x f(x,\tilde{y}(x))-\nabla_{yx}^2g(x,\tilde{y}(x))\tilde{H}^{-1}\nabla_y f(x,\tilde{y}(x)) - \left(\nabla_x f(x,y^*(x))-\nabla_{yx}^2g(x,y^*(x))H^{-1}\nabla_y f(x,y^*(x))\right)\| \notag\\
    &\le \|\nabla_x f(x,\tilde{y}(x)) - \nabla_x f(x,y^*(x))\| + \|\nabla_{yx}^2g(x,\tilde{y}(x))\tilde{H}^{-1}\nabla_y f(x,\tilde{y}(x)) - \nabla_{yx}^2g(x,y^*(x))H^{-1}\nabla_y f(x,y^*(x))\| \notag\\
    &\le \|\nabla_x f(x,\tilde{y}(x)) - \nabla_x f(x,y^*(x))\| + \|\nabla_{yx}^2g(x,\tilde{y}(x))\tilde{H}^{-1}\nabla_y f(x,\tilde{y}(x)) - \nabla_{yx}^2g(x,\tilde{y}(x))\tilde{H}^{-1}\nabla_y f(x,y^*(x))\| \notag\\
    &+\|\nabla_{yx}^2g(x,\tilde{y}(x))\tilde{H}^{-1}\nabla_y f(x,y^*(x)) - \nabla_{yx}^2g(x,\tilde{y}(x))H^{-1}\nabla_y f(x,y^*(x))\| \notag\\
    &+ \|\nabla_{yx}^2g(x,\tilde{y}(x))H^{-1}\nabla_y f(x,y^*(x)) - \nabla_{yx}^2g(x,y^*(x))H^{-1}\nabla_y f(x,y^*(x))\|\notag\\
    &\le  \underbrace{\left(\Lf + \Lf\dfrac{\Lg}{\mu_g} + \Lf\dfrac{2\Lgg}{\mu_g^2}\Lg + \Lf\dfrac{1}{\mu_g}\Lgg\right)}_{L_G}\|\tilde{y}(x) - y^*(x)\|,\nonumber
\end{align}
where the last inequality comes from Assumption \ref{assumption:general2} and $\|\tilde{H}^{-1} - H^{-1}\| = \|\tilde H^{-1}(H - \tilde H)H^{-1}\| \le \dfrac{2\Lgg}{\mu_g^2}$, 
Next, for the Hessian error estimation, we directly derive from Lemma \ref{lem:lip_xy} that
\begin{align}
    \|\bm{H} - \nabla^2_{xx}\Phi(x)\| = \|\widehat{\nabla}^2_{xx}\Phi(x, \tilde{y}(x)) - \widehat{\nabla}^2_{xx}\Phi(x, y^*(x))\| \le L_H(:= L_{\widehat{\nabla}^2_{xx}\Phi})\|\tilde{y}(x) - y^*(x)\|.\nonumber
\end{align}
\end{proof}

Now, with Lemma \ref{lem:lip_xy} and Lemma \ref{lemma:GH_lip} in place, we are ready to prove 
Proposition \ref{prop:lPhi}.

\subsection{Proof of Proposition \ref{prop:lPhi}}\label{sec:pf_prop_lPhi}
\begin{proof}
First, $y^*(x)$ is Lipschitz continuous with a constant $L_{y,1}:= \dfrac{\Lgg}{\mu_g}$ because
\begin{align}
    \|\nabla_x y^*(x)\| &= \|\nabla^2_{xy}g(x,y^*(x))\|\cdot\|(\nabla^2_{yy}g(x,y^*(x)))^{-1}\| \le L_{y, 1} . \nonumber
    \end{align}
    Secondly, by Lemma \ref{lem:lip_xy}, and denoting $(x_1, y_1) = (x_1, y^*(x_1))$ and $(x_2, y_2) = (x_2, y^*(x_2))$, we have 
    \begin{align}
        \|\nabla_{xx}^2 \Phi(x_1) - \nabla_{xx}^2 \Phi(x_2)\| &= \|\widehat{\nabla}^2_{xx}\Phi(x_1, y^*(x_1)) - \widehat{\nabla}^2_{xx}\Phi(x_2, y^*(x_2))\| \notag\\
        &\le L_{\widehat{\nabla}^2_{xx}\Phi} \|(x_1, y^*(x_1)) - (x_2, y^*(x_2))\|\notag\\
        &\le L_{\nabla^2\Phi} (:= L_{\widehat{\nabla}^2_{xx}\Phi}(1+L_{y, 1})) \|x_1 - x_2\|. \nonumber
    \end{align}
\end{proof}

\section{Proof of Results in Section \ref{sec:3}} \label{sec:pf_of_dl}

\begin{proof}
Let $\delta_{k} = \min\left\{\frac{a}{L_{\nabla\Phi}}\|s_k\|^2, \frac{b}{L_{\nabla^2\Phi}}\|s_k\|\right\}$ to denote the inner stopping threshold at iteration $k$. 
Suppose that for all iterations $k = 1, \dots, K$, the step size satisfies $\|s_k\| > \frac{\sqrt{\varepsilon}}{c}$, where $c = \max\left\{ \sqrt{\frac{M}{2}+a+b+\frac{L_{\nabla^2\Phi}}{2}}, \ \frac{M}{2}+b+L_{\nabla^2\Phi}\right\}$. If this condition does not hold for some $k$ (i.e., $\|s_k\| \le \frac{\sqrt{\varepsilon}}{c}$), then $x_k$ is already an $\varepsilon$-SOSP. To see this, observe
\begin{align}
    \|\nabla \Phi(x_{k})\| &\le \|\nabla\Phi(x_{k-1}) + \nabla^2\Phi(x_{k-1})s_k\| + \frac{L_{\nabla^2\Phi}}2\|s_k\|^2 \nonumber\\
    &\le\|\bm G_{k-1} + \bm H_{k-1}s_k\| + \|\nabla\Phi(x_{k-1}) - \bm G_{k-1}\| + \|\nabla^2\Phi(x_{k-1}) - \bm H_{k-1}\|\|s_k\| + \frac{L_{\nabla^2\Phi}}2\|s_k\|^2 \nonumber\\
    &\le \left(\frac{M}{2} + a + b + \frac{L_{\nabla^2\Phi}}2\right)\|s_k\|^2 \le \varepsilon, \nonumber\\
    \nabla^2\Phi(x_k) &\succcurlyeq \bm H_{k-1} - \|\nabla^2\Phi(x_{k}) - \nabla^2\Phi(x_{k-1})\|\bm{I} - \|\nabla^2\Phi(x_{k-1}) - \bm H_{k-1}\|\bm{I} \nonumber\\
    &\succcurlyeq -\left(\dfrac{M}{2} + b + L_{\nabla^2\Phi}\right)\|s_k\|\bm I
\end{align}
Thus, if the algorithm has not been terminated, then it means that we must have $\|s_k\| > \frac{\sqrt{\varepsilon}}{c}$ at that point. This implies a lower bound on the threshold
\begin{align}
    \delta_k \ge \min\left\{\frac{a\varepsilon}{c^2L_{\nabla\Phi}}, \frac{b\sqrt{\varepsilon}}{cL_{\nabla^2\Phi}}\right\} := \delta_\varepsilon.\nonumber
\end{align}
We now proceed to bound the total inner iterations $\sum_{k=1}^K N_k$. From \cite{wang2018note}, we have the summation bound $\sum_{k=1}^K\|s_k\|^3 \le \left(\frac{3M - 2L_{\nabla^2\Phi}}{12} - 2a-2b\right)^{-1}\left(\Phi(x_0) - \Phi^* + (a+b)\|s_0\|^3\right)$, which implies $\frac{1}{K}\sum_{k=1}^K\|s_k\| \le \left(\frac{1}{K}\sum_{k=1}^K \|s_k\|^3\right)^{1/3} = \mathcal O(K^{-1/3}) $.

Since $g(x,\cdot)$ is $\mu_g$-strongly convex and $\Lgg$-smooth, the inner gradient descent step with stepsize $\beta = 1/\Lgg$ satisfies $\|y_{k,t}-y^*(x_k)\| \le \left(1-\frac{\mu_g}{\Lgg}\right)^t \|y_{k,0}-y^*(x_k)\|$ for all $t\ge 0$. Using the triangle inequality and the initialization $y_{k,0} = y_{k-1, N_{k-1}}$, we have
\begin{align}
    \|y_{k,0}-y^*(x_k)\| &= \|y_{k-1,N_{k-1}}-y^*(x_k)\| \le \|y_{k-1,N_{k-1}}-y^*(x_{k-1})\| + \|y^*(x_{k-1})-y^*(x_k)\| \nonumber\\
    &\le \delta_{k-1} + L_{y,1}\|s_k\|.\nonumber
\end{align}
By the residual stopping criterion and the $\mu_g$-strong convexity of $g(x_k,\cdot)$, we have $\|y_{k,N_k}-y^*(x_k)\|\le\dfrac{1}{\mu_g}\|\nabla_y g(x_k,y_{k,N_k})\|\le\delta_k$. Moreover, since $g(x_k,\cdot)$ is $\Lgg$-smooth, the residual stopping criterion is guaranteed if $\|y_{k,N_k}-y^*(x_k)\|\le\frac{\mu_g}{\Lgg}\delta_k$.
Therefore, it is sufficient that $\left(1-\frac{\mu_g}{\Lgg}\right)^{N_k}
    \left(\delta_{k-1}+L_{y,1}\|s_k\|\right)
    \le
    \frac{\mu_g}{\Lgg}\delta_k.$
Solving for $N_k$, we obtain
\begin{align}
    N_k
    \le
    \frac{1}{-\log(1-\frac{\mu_g}{\Lgg})}
    \log\left(
    \frac{\Lgg(\delta_{k-1}+L_{y,1}\|s_k\|)}
    {\mu_g\delta_k}
    \right)\le
    \frac{\Lgg}{\mu_g}
    \left(
    \log\left(\frac{\delta_{k-1}}{\delta_k}\right)
    +
    \log\left(1+\frac{L_{y,1}\|s_k\|}{\delta_{k-1}}\right)
    +
    \log\left(\frac{\Lgg}{\mu_g}\right)
    \right).
    \nonumber
\end{align}
Summing over $k=1$ to $K$, we have
\begin{align}
    \sum_{k=1}^K N_k &\le \frac{\Lgg}{\mu_g}\log\left(\frac{\delta_0}{\delta_K}\right) + \sum_{k=1}^K \frac{\Lgg}{\mu_g}\log\left(1 + L_{y,1}\frac{\|s_k\|}{\delta_\varepsilon}\right)+
    \frac{K \Lgg}{\mu_g}
    \log\left(\frac{\Lgg}{\mu_g}\right) \nonumber\\
    &\le = \frac{\Lgg}{\mu_g}\log\left(\frac{\delta_0}{\delta_K}\right) + K\log\left(1 + \dfrac{L_{y,1}}{\delta_\varepsilon}\cdot \dfrac{1}{K}\sum_{k=1}^K\|s_k\|\right)+
    \frac{K \Lgg}{\mu_g}
    \log\left(\frac{\Lgg}{\mu_g}\right) \nonumber\\
    &=\mathcal{O}(\varepsilon^{-1.5}\log(\varepsilon^{-1})),\nonumber
\end{align}
since $\delta_\varepsilon = \mathcal{O}(\varepsilon)$ and $K = \mathcal O(\varepsilon^{-3/2})$, we have $\|s_k\| = \mathcal O(K^{-1/3}) = \mathcal{O}(\varepsilon^{1/2})$. This completes the proof.
\end{proof}

\section{Proof of the Results in Section \ref{sec:4}} \label{sec:pf_of_sl}

\subsection{Technical Lemmas}
First we introduce two technical lemmas below, showing the stepwise contraction of gradient descent and Newton on the strongly convex lower level problem.

\begin{proposition}[\cite{nesterov2013introductory}, Theorem 2.1.15]\label{lem:gd}
    Under Assumptions \ref{assumption:general1}, \ref{assumption:nonlinear2} and \ref{assumption:general2}, for the lower level gradient descent step $y_{k+1} = y_k - \beta\nabla_yg(x_k, y_k)$ and $\beta\le \frac{2}{\mu_g+\Lgg}$, denoting $\rho_g = \frac{2\mu_g\Lgg}{\mu_g + \Lgg}$ and $S_H := \sqrt{1 - \rho_g\beta}\in(0, 1)$, we have
    \begin{align}
        \|y_{k+1} - y^*(x_k)\| &\le S_H \|y_{k} - y^*(x_k)\|.\nonumber
    \end{align}
\end{proposition}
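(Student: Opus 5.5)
The plan is to prove the contraction of one gradient step from the co-coercivity of $\nabla_y g(x_k,\cdot)$. Fix $x_k$ and write $y^* = y^*(x_k)$, so that $\nabla_y g(x_k,y^*)=0$. Set $r_k = y_k - y^*$. Since $y^*$ is a fixed point of the gradient map, subtracting it from both sides of the update gives
\begin{align*}
    r_{k+1} = r_k - \beta\bigl(\nabla_y g(x_k,y_k)-\nabla_y g(x_k,y^*)\bigr).
\end{align*}
Expanding the square,
\begin{align*}
    \|r_{k+1}\|^2 = \|r_k\|^2 - 2\beta\langle \nabla_y g(x_k,y_k)-\nabla_y g(x_k,y^*), r_k\rangle + \beta^2\|\nabla_y g(x_k,y_k)-\nabla_y g(x_k,y^*)\|^2 .
\end{align*}

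The key inequality is the standard one for $\mu_g$-strongly convex functions with $\Lgg$-Lipschitz gradient (Nesterov, Thm.~2.1.12). The strong convexity comes from Assumption~\ref{assumption:nonlinear2}. The $\Lgg$-Lipschitz gradient in $y$ follows from the bound $\|\nabla^2_{yy}g\|\le\Lgg$ in Assumption~\ref{assumption:general2} together with the $C^2$ regularity in Assumption~\ref{assumption:general1}. Writing $d=\nabla_y g(x_k,y_k)-\nabla_y g(x_k,y^*)$, the inequality reads
\begin{align*}
    \langle d, r_k\rangle \ge \frac{\mu_g\Lgg}{\mu_g+\Lgg}\|r_k\|^2 + \frac{1}{\mu_g+\Lgg}\|d\|^2 .
\end{align*}
Substituting it into the expansion gives
\begin{align*}
    \|r_{k+1}\|^2 \le \Bigl(1-\tfrac{2\beta\mu_g\Lgg}{\mu_g+\Lgg}\Bigr)\|r_k\|^2 + \beta\Bigl(\beta-\tfrac{2}{\mu_g+\Lgg}\Bigr)\|d\|^2 .
\end{align*}
The hypothesis $\beta\le 2/(\mu_g+\Lgg)$ makes the last term nonpositive, so we can drop it. This leaves $\|r_{k+1}\|^2\le (1-\rho_g\beta)\|r_k\|^2$, and taking square roots gives the claim.

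It remains to check that $S_H\in(0,1)$. We have $\rho_g\beta>0$ whenever $\beta>0$, so $S_H<1$. In the other direction, $\rho_g\beta\le \frac{4\mu_g\Lgg}{(\mu_g+\Lgg)^2}\le 1$ by AM--GM, with equality only if $\mu_g=\Lgg$ and $\beta$ equals its upper bound. So $S_H\ge 0$, and $S_H>0$ except in that degenerate case; there one can take any $S_H\in(0,1)$, since the contraction is then exact.

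The main point needing care is verifying that the co-coercivity inequality applies uniformly in $x_k$. This holds because the constants $\mu_g$ and $\Lgg$ are uniform in $x$ by the assumptions. Everything else is a routine algebraic expansion.
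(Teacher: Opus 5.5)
Your proposal is correct and is exactly the argument behind Nesterov's Theorem 2.1.15, which the paper cites without reproducing a proof: expand $\|r_{k+1}\|^2$, apply the inequality of Nesterov's Theorem 2.1.12 (valid uniformly in $x_k$ since $\mu_g I \preceq \nabla^2_{yy} g \preceq \Lgg I$), and discard the term $\beta\bigl(\beta - \tfrac{2}{\mu_g+\Lgg}\bigr)\|d\|^2 \le 0$. Your side remarks are also correct. The claim $S_H\in(0,1)$ in the statement needs $\beta>0$, and it fails only in the edge case $\mu_g=\Lgg$, $\beta = 2/(\mu_g+\Lgg)$, where $S_H=0$ and the step lands exactly on $y^*(x_k)$. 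In that case the contraction inequality itself still holds.
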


\begin{lemma}\label{lem:newton}
Under Assumptions \ref{assumption:general1}, \ref{assumption:nonlinear2} and \ref{assumption:general2}, for the lower level newton step $\hat{y}_{k+1} = y_{k+1} - \left(\nabla_{yy}^2 g(x_k, y_{k+1})\right)^{-1}\nabla_y g(x_k, y_{k+1})$, denoting $S_G = \frac{\Lggg}{2\mu_g}(1-\rho_g\beta)$ we have:
\begin{align}
    \|\hat{y}_{k+1} - y^*(x_k)\| \le S_G\|y_{k} - y^*(x_k)\|^2. \nonumber
\end{align}
\end{lemma}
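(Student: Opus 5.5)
The natural route is the standard local quadratic convergence estimate for Newton's method, combined with the gradient-step contraction of Proposition \ref{lem:gd} to pass from $y_{k+1}$ back to $y_k$. Fix $x_k$ and write $y^* = y^*(x_k)$ and $H_{k+1} = \nabla^2_{yy} g(x_k, y_{k+1})$. Since $\nabla_y g(x_k, y^*) = 0$, the Newton update gives
\begin{align*}
\hat y_{k+1} - y^* = H_{k+1}^{-1}\Bigl( H_{k+1}(y_{k+1}-y^*) - \bigl(\nabla_y g(x_k,y_{k+1}) - \nabla_y g(x_k,y^*)\bigr)\Bigr).
\end{align*}

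\textbf{Step 1 (integral form).} Writing the gradient difference via the fundamental theorem of calculus, the bracket equals
\begin{align*}
\int_0^1 \Bigl(\nabla^2_{yy} g(x_k, y_{k+1}) - \nabla^2_{yy} g\bigl(x_k, y^* + \tau(y_{k+1}-y^*)\bigr)\Bigr)(y_{k+1}-y^*)\,d\tau .
\end{align*}

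\textbf{Step 2 (Lipschitz Hessian).} The Hessian $\nabla^2_{yy} g(x_k,\cdot)$ is Lipschitz in $y$ with constant $\Lggg$, because Assumption \ref{assumption:general2} bounds $\|\nabla^3_{yyy} g\| \le \Lggg$. The integrand is therefore bounded in norm by $\Lggg(1-\tau)\|y_{k+1}-y^*\|^2$, and integrating gives $\frac{\Lggg}{2}\|y_{k+1}-y^*\|^2$.

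\textbf{Step 3 (inverse Hessian).} Strong convexity (Assumption \ref{assumption:nonlinear2}) gives $\|H_{k+1}^{-1}\| \le 1/\mu_g$. Combining,
\begin{align*}
\|\hat y_{k+1}-y^*\| \le \frac{\Lggg}{2\mu_g}\|y_{k+1}-y^*\|^2 .
\end{align*}

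\textbf{Step 4 (back to $y_k$).} Apply Proposition \ref{lem:gd}, valid since $\beta \le 2/(\mu_g+\Lgg)$: we have $\|y_{k+1}-y^*\|^2 \le S_H^2\|y_k-y^*\|^2 = (1-\rho_g\beta)\|y_k-y^*\|^2$. Substituting yields exactly $S_G = \frac{\Lggg}{2\mu_g}(1-\rho_g\beta)$.

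No step is a genuine obstacle; this is textbook Newton analysis. The only care needed is in Step 2, making sure the right constant is the $y$-Lipschitz constant of $\nabla^2_{yy} g$. That constant comes from the bound on $\nabla^3_{yyy} g$, via the mean value inequality, not from $\Lgggg$. The other point to get right is the squaring of $S_H$ in Step 4, which is what produces the factor $(1-\rho_g\beta)$ rather than $\sqrt{1-\rho_g\beta}$.
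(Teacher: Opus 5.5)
Your proposal is correct and follows the same route as the paper: bound the Newton error by $\frac{\Lggg}{2\mu_g}\|y_{k+1}-y^*(x_k)\|^2$ using the third-derivative bound and $\|(\nabla^2_{yy}g)^{-1}\|\le 1/\mu_g$, then square the gradient-step contraction $S_H=\sqrt{1-\rho_g\beta}$ to get the factor $1-\rho_g\beta$. Your integral form of the remainder is a cleaner justification than the paper's single intermediate point $\xi$, which relies on a mean value theorem that does not hold verbatim for vector-valued maps; you also state explicitly the appeal to Proposition \ref{lem:gd} that the paper leaves implicit.
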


\begin{proof}
The optimality condition of $y^*(x_k)$ ensures that $\nabla_yg(x_k, y^*(x_k)) = 0$.
For all fixed $y\in\RR^n$, use Taylor expansion of $\nabla_y g(x_k, y)$ around $y$ and there exists $\xi$ such that \begin{align*}
    \nabla_y g(x_k, y^*(x_k)) = \nabla_yg(x_k, y) + \nabla^2_{yy} g(x_k, y)(y-y^*(x_k)) + (\xi-y^*(x_k))^\top\nabla^3_{yyy}g(x_k, (\xi-y^*(x_k)) = 0
\end{align*}
Then, setting $y = \hat y_{k+1}$, and after the Newton step, we have
\begin{align}
    \|\hat{y}_{k+1} - y^*(x_k)\| &= \|y_{k+1} - y^*(x_k) - \nabla^2_{yy}g(x_k, y_{k+1})^{-1}(y_{k+1}-y^*(x_k))\| \nonumber\\
    &= \|y_{k+1} - y^*(x_k) - (y_{k+1} - y^*(x_k)) \nonumber\\
    &\qquad- \frac{1}{2}\left(\nabla_{yy}^2 g(x_k, y_{k+1})\right)^{-1}(y^*(x_k) - y_{k+1})^T\nabla_{yyy}^3 g(x_k, \zeta_{k+1}) (y^*(x_k) - y_{k+1})\| \nonumber\\
    &\le \frac{1}{2}\|\left(\nabla_{yy}^2 g(x_k, y_{k+1})\right)^{-1}\|\Lggg\|y_{k+1} - y^*(x_k)\|^2 \le \frac{\Lggg}{2\mu_g}\|y_{k+1} - y^*(x_k)\|^2. \nonumber
    \end{align}
Letting $S_G = \dfrac{\Lggg}{2\mu_g}(1-\rho_g\beta_k)$, the desired result follows.
\end{proof}

\subsection{Proof of Theorem \ref{thm:main}}\label{sec:pf_of_main}
\begin{proof}
The one-step decrease of CRN for the hyperfunction can be estimated as:
\begin{align}
    &\quad\ \Phi(x_{k+1}) - \Phi(x_k) \nonumber\\&\le \widehat{\nabla}_x\Phi(x_k, \hat{y}_{k+1})^\top s_{k+1} + \frac{1}{2} s_{k+1}^\top\widehat{\nabla}_{xx}^2\Phi(x_k,y_{k+1})s_{k+1} + \frac{M}{6}\|s_{k+1}\|^3\nonumber\\
    &\quad+ \left(\widehat{\nabla}_x\Phi(x_k, y^*(x_k)) - \widehat{\nabla}_x\Phi(x_k, \hat{y}_{k+1})\right)^\top s_{k+1} \nonumber\\
    &\quad+ \frac{1}{2}s_{k+1}^\top\left(\widehat{\nabla}_{xx}^2\Phi(x_k, y^*(x_k)) - \widehat{\nabla}_{xx}^2\Phi(x_k,y_{k+1})\right)s_{k+1} + \frac{L_{\nabla^2\Phi}-M}{6}\|s_{k+1}\|^3 \nonumber\\
    &\le -\frac{3M-2L_{\nabla^2\Phi}}{12}\|s_{k+1}\|^3 + L_G\|y^*(x_k) - \hat{y}_{k+1}\|\|s_{k+1}\| + \frac{1}{2}L_H\|y^*(x_k) - y_{k+1}\|\|s_{k+1}\|^2 \label{eq:main_1}\\
    &\le -\frac{3M-2L_{\nabla^2\Phi}}{12}\|s_{k+1}\|^3 + L_GS_G\|y^*(x_k) - y_{k}\|^2\|s_{k+1}\|  + \frac{1}{2}L_HS_H\|y^*(x_k) - y_{k}\|\|s_{k+1}\|^2 \label{eq:main_2}\\
    &\le -\frac{3M-2L_{\nabla^2\Phi}}{12}\|s_{k+1}\|^3 + \frac{2L_GS_G + \frac{1}{2}L_HS_H}{3}\|y^*(x_k) - y_{k}\|^3  + \frac{L_GS_G + L_HS_H}{3}\|s_{k+1}\|^3,\label{eq:main_pf_1}
\end{align}
where \eqref{eq:main_1} applies Lemma \ref{lemma:GH_lip} and the CRN descent lemma, \eqref{eq:main_2} uses $S_G$ and $S_H$, and \eqref{eq:main_pf_1} applies the AM-GM inequality. Using the generalized AM-GM inequality $\|a+b\|^3 \le (1+2\gamma + \gamma^2)\|a\|^3 + \frac{2+\gamma+\gamma^2}{\gamma^2} \|b\|^3$, we obtain the following error recursion:
\begin{align}
   &\quad\ \|y_{k+1} - y^*(x_{k+1})\|^3 - \|y_{k} - y^*(x_{k})\|^3 \nonumber\\&\le \left(\|y_{k+1} - y^*(x_{k})\| + \|y^*(x_{k+1}) - y^*(x_{k})\|\right)^3 - \|y_{k} - y^*(x_{k})\|^3\nonumber\\
   &\le (1+2\gamma + \gamma^2)\|y_{k+1} - y^*(x_{k})\|^3 + \frac{2+\gamma+\gamma^2}{\gamma^2}\|y^*(x_{k+1}) - y^*(x_{k})\|^3 - \|y_{k} - y^*(x_{k})\|^3\nonumber\\
   &\le \left((1+2\gamma + \gamma^2)S_H^3-1\right)\|y_{k} - y^*(x_{k})\|^3 + \frac{2+\gamma+\gamma^2}{\gamma^2}L^3_{y,1}\|s_{k+1}\|^3.\label{eq:main_pf_2}
\end{align}
Summing up \eqref{eq:main_pf_1} and \eqref{eq:main_pf_2}, we are in a position to introduce the Lyapunov function $\bbV_k := \Phi(x_k) + \tau \|y_{k} - y^*(x_{k})\|^3$, which satisfies the following descent property:
\begin{align}
    \bbV_{k+1} - \bbV_k &\le -a \|y_{k} - y^*(x_{k})\|^3 - b \|s_{k+1}\|^3,\label{eq:descent_1}
\end{align}
where 
\(
a := \tau\left(1-(1+2\gamma + \gamma^2)S_H^3\right) - \frac{2L_GS_G + \frac{1}{2}L_HS_H}{3} \)
and 
\(b := \frac{3M-2L_{\nabla^2\Phi}}{12} - \frac{L_GS_G + L_HS_H}{3} - \tau\frac{2+\gamma+\gamma^2}{\gamma^2} L^3_{y,1}.
\)
To ensure that \eqref{eq:descent_1} provides a proper descent property, we choose constant $\gamma$ to satisfy $1-(1+2\gamma+\gamma^2)S_H^3 > 0$.  
Furthermore, we choose $\tau$ and $M$ to ensure $\alpha, \beta > 0$, and the following inequalities hold as well: 
\begin{align}
    &\tau(1-(1+2\gamma+\gamma^2)S_H^3) > \frac{2L_GS_G + \frac{1}{2}L_HS_H}{3} \label{eq:satif_tau} \\
    &M > \frac{2L_{\nabla^2\Phi}}{3}+4\left(\frac{L_GS_G + L_HS_H}{3} + \tau \frac{2+\gamma+\gamma^2}{\gamma^2} L^3_{y,1}\right). \label{eq:satif_M}
\end{align}
Summing \eqref{eq:descent_1} over $k$ and denoting $\bbV := \inf_{x, y}\left\{\Phi(x) + \tau\|y - y^*(x)\|^3\right\}$ 
yields 
\begin{align*}
    K \min_{k\in[K]} \left[ a\|y_{k} - y^*(x_{k})\|^3 + b\|s_{k+1}\|^3 \right] \le \sum_{k=1}^K \left(\bbV(x_k) - \bbV(x_{k+1})\right) \le \bbV(x_1) - \bbV^* =: \Delta.
\end{align*}
Thus, there exists $\bar k\in[K]$ such that
\begin{align}\label{eq:final_rate}
     \|y_{\bar k} - y^*(x_{\bar k})\| \le \sqrt[3]{\frac{\Delta}{a K}} = \mathcal{O}(K^{-1/3}), \qquad \|s_{\bar k+1}\| \le \sqrt[3]{\frac{\Delta}{b K}} = \mathcal{O}(K^{-1/3}),
\end{align}
Notice that we are unable to access $y^*(x)$, so at the algorithm output we need a more reasonable criterion $\nabla_yg(x_k, y_k)$ instead of $\|y_k - y^*(x_k)\|$. Since $\nabla_y g(x, \cdot)$ is $\mu_g$-strongly convex and $\Lgg$-smooth in $y$, we have, for every $k$, \begin{align}
    \mu_g\|y_k - y^*(x_k)\| \le \|\nabla_y g(x_k, y_k)\| \le \Lgg\|y_k - y^*(x_k)\|. \nonumber
\end{align}
Therefore let $k = \bar k$, we have \begin{align}
    \|\nabla_yg(x_{\bar k}, y_{\bar k})\|^3 + \|s_{\bar k+1}\|^3 &\le \max\left\{\dfrac{\Lgg^3}{a}, \dfrac{1}{b}\right\}\left[a\|y_{\bar k} - y^*(x_{\bar k})\|^3 + b\|s_{\bar k+1}\|^3\right] \nonumber,\\
    \|\nabla_yg(x_{\bar k}, y_{\bar k})\|^3 + \|s_{\bar k+1}\|^3 &\ge \min\left\{\dfrac{\mu_g^3}{a}, \dfrac{1}{b}\right\}\left[a\|y_{\bar k} - y^*(x_{\bar k})\|^3 + b\|s_{\bar k+1}\|^3\right],\nonumber
\end{align}
and such $\bar k$ can be identified as $\bar k\in\argmin_{k}\left\{\|\nabla_yg(x_{k}, y_{k})\|^3 + \|s_{k+1}\|^3\right\}$.
We can now show the convergence rate:
\begin{align}
    \|\nabla_x \Phi(x_{\bar k+1})\| &= \left\|\nabla_x \Phi(x_{\bar k+1}) - \left(\bm{G}_{\bar k} + \bm{H}_{\bar k}s_{\bar k+1} + \frac{M}{2}\|s_{\bar k+1}\|s_{\bar k+1}\right)\right\| \nonumber\\
    &\le \left\|\nabla_x \Phi(x_{\bar k+1}) - \nabla_x \Phi(x_{\bar k}) - \nabla_{xx}^2\Phi(x_{\bar k})s_{\bar k+1}\right\| + \|\bm{G}_{\bar k} - \nabla \Phi_x(x_{\bar k})\| \nonumber\\
    &\quad + \|(\bm{H}_{\bar k} - \nabla_{xx}^2\Phi(x_{\bar k}))s_{\bar k+1}\| + \frac{M}{2}\|s_{\bar k+1}\|^2\nonumber\\
    &\le \dfrac{L_{\nabla^2\Phi}}{2}\|s_{\bar k+1}\|^2 + L_GS_G\|y^*(x_{\bar k}) - y_{\bar k}\|^2 + L_HS_H\|y^*(x_{\bar k}) - y_{\bar k}\|\|s_{\bar k+1}\| + \frac{M}{2}\|s_{\bar k+1}\|^2 \nonumber\\ &= \mathcal{O}(K^{-2/3}). \nonumber
\end{align}
Additionally, we have
\begin{align*}
    \nabla^2\Phi(x_{\bar k+1}) &\succcurlyeq \bm{H}_{\bar k} - \|\nabla^2\Phi(x_{\bar k+1}) - \nabla^2\Phi(x_{\bar k})\|\bm{I} - \|\nabla^2\Phi(x_{\bar k}) - {H}_{\bar k}\|\bm{I} \\
    &\succcurlyeq -\left(\left(\frac{M}{2} + L_H\right)\|s_{\bar k+1}\| + L_HS_H\|y_{\bar k} - y^*(x_{\bar k})\|\right)\bm{I}.
\end{align*} 
Finally, from \eqref{eq:final_rate} it follows that 
\begin{align}
    \lambda_{\min}\left(\nabla^2\Phi(x_{\bar k+1})\right) \ge -\left(\frac{M}{2} + L_H\right)\|s_{\bar k+1}\| - L_HS_H\|y_{\bar k} - y^*(x_{\bar k})\| = \mathcal{O}(K^{-1/3}). \nonumber
\end{align}
Therefore, the oracle complexity of locating an $\varepsilon$-SOSP is $\mathcal{O}(\varepsilon^{-1.5})$.
\end{proof}

\end{document}